\definecolor{webgreen}{rgb}{0.1,.1,.8}
 \newtheorem{question}{Question}
\newcommand{\N}{\mathbb{N}}
\newcommand{\tvname}{A\text{-Triveni triplet}}
 \newtheorem{thm}{Theorem}[section]
 \newtheorem{cor}[thm]{Corollary}
 \newtheorem{lem}[thm]{Lemma}
 \newtheorem{prop}[thm]{Proposition}
 \newtheorem{defn}[thm]{Definition}
 \theoremstyle{remark}
 \numberwithin{equation}{section}
\title{Geometric progressions in  syndetic sets}
\author{Bhuwanesh Rao Patil}
\address{Harish-Chandra Research Institute, HBNI\\ Chhatnag Road, Jhunsi, Allahabad - 211019, Uttar Pradesh, India.}
 \numberwithin{equation}{section}
\begin{document}

% \title{Geometric progressions in  syndetic sets}
% 
% \author{Bhuwanesh Rao Patil}
% \address{Harish-Chandra Research Institute, HBNI,\\ Chatnag Road, Jhunsi, Allahabad - 211019, Uttar Pradesh, India.}
% \email{bhuwaneshrao@hri.res.in, bhuwanesh1989@gmail.com}
% 
% \subjclass{Primary 11B05, 11B75; Secondary 11A05, 11A07, 11D45}
% \keywords{Syndetic set, Geometric progression, Triveni triplet, congruence}
% 
% \date{January 1, 2004}

\begin{abstract}
In order to investigate multiplicative structures in additively large sets,  Beiglb\"{o}ck et al. raised a significant open question as to  whether or
not every subset of the natural numbers with bounded gaps (syndetic set) contains arbitrarily long geometric progressions.
A result of Erd\H{o}s implies that syndetic sets contain a $2$-term geometric progression with integer common ratio,  but we still do
not know if they contain  such a progression with  common ratio being perfect square.  In this article, we  prove that for
each $k\in \mathbb{N}$, 
a syndetic set  contains $2$-term geometric progressions  with common ratios of the form  $n^kr_1$ and $p^kr_2$, where  $p\in\mathbb{P}$ (the set of primes), $n$ is a composite number, $r_1\equiv 1 \pmod{n}$, $r_2\equiv 1\pmod{p}$ and $r_1,r_2\in \mathbb{N}$. We also show that  2-syndetic sets (sets  with bounded gap two) contain  infinitely many $2$-term 
geometric progressions  with their respective common ratios being perfect squares.
\end{abstract}

%%% ----------------------------------------------------------------------
\maketitle
%%% ----------------------------------------------------------------------
%\tableofcontents
\section{Introduction}\label{intro}
 Previous research (e.g. \cite{Beiglbock}, \cite{Bergelson1}, \cite{Bergelson} and \cite{hindman})  establishes that  sets which are
 large in any of several multiplicative senses must have substantial additive structure.
 For example, a multiplicatively piecewise syndetic set\footnote{For definition of multiplicatively piecewise syndetic set, see \cite{Beiglbock}.} in $\mathbb{N}$ must contain arbitrarily long arithmetic
 progressions \cite[Theorem 1.3]{Beiglbock}. However, very little appears to be known as the existence of multplicative structures in 
 ``additively large'' sets.
  If we define ``additive largeness" as having positive upper asymptotic density\footnote{Upper asymptotic density of a set $A\subset \mathbb{N}$ is defined by $\overline d(A):=\limsup\frac{|A\cap[1, n]|}{n}$}, then one can observe that there are
  additively large sets that do not contain three term geometric progressions. For instance, the set of  square-free numbers is additively large, because it has positive upper asymptotic density, but does not 
  contain a configuration of the form $\{x,xr^2\}$. One may still ask if the property of containing multiplicative structures holds for interesting classes of sets that are additively large. This brings us to the following definition: 
 \begin{defn}[Syndetic set]
 If $l\in\mathbb{N}$, then   $A\subset \mathbb{N}$ is called $l$-syndetic set if $A$ has a non-empty intersection with every set of $l$ consecutive natural numbers. A subset of the natural numbers which is  $l$-syndetic for some $l\in \mathbb{N}$, is known as a syndetic set.
 \end{defn}
 An infinite arithmetic progression is the simplest example of a syndetic set. Plainly, syndetic sets have positive upper asymptotic density and thus are additively large.
 Beiglb\"{o}ck et al. \cite{Beiglbock}  recognized the significance of looking for
 geometric progressions  in syndetic sets in order to study multiplicative structures in additively large sets. They asked the following question.
 \begin{question}\label{question1}
  If $A$ is  syndetic, do there exist $x,y\in \mathbb{N}$ such that $\{x,xy, xy^2\}$ $\subset A$?
 \end{question}
 In recent work\cite{Glasscock}, Daniel Glasscock et al. gave some evidence towards an affirmative answer to this question by showing that many syndetic sets
 of dynamical origin contain arbitrarily long geometric progressions. 
The fact that syndetic sets contain  $2$-term geometric progression with integer common ratio  is a consequence of the following propositions,
namely Proposition \ref{erdos} for dense sets and Proposition \ref{aps_beiglbock} for  additively piecewise syndetic sets.
\begin{prop}[Erd\H{o}s \cite{Erdos}]\label{erdos}
 Suppose that $A$ is  a subset of natural numbers such that lower asymptotic density 
 $\underline d(A):=\liminf\frac{|A\cap[1, n]|}{n}>0$. Then $A$ contains a $2$-term geometric progression with integer common ratio.
\end{prop}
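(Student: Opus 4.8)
The plan is to prove the contrapositive. Observe first that a $2$-term geometric progression with integer common ratio $r\ge 2$ is precisely a pair of distinct elements $x,xr\in A$, i.e.\ two elements of $A$ one of which properly divides the other. Thus the proposition is equivalent to the assertion that a set containing no such pair --- a \emph{primitive} set, i.e.\ an antichain under divisibility --- must satisfy $\underline d(A)=0$. So I would assume $A$ is primitive and aim to show its lower density vanishes, contradicting $\underline d(A)>0$.

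As a warm-up exhibiting the shape of the argument, decompose $\mathbb{N}$ into the chains $C_m=\{m\cdot 2^j:j\ge 0\}$ indexed by odd $m$. Any two members of a fixed $C_m$ are in integer ratio a power of $2$, so a primitive $A$ meets each $C_m$ at most once; since $[1,N]$ meets exactly $\lceil N/2\rceil$ of these chains, $|A\cap[1,N]|\le\lceil N/2\rceil$ and $\overline d(A)\le 1/2$. This shows primitive sets are thin, but it is not enough: by a classical construction of Besicovitch the upper density of a primitive set can be arbitrarily close to $1/2$, so the conclusion $\underline d(A)=0$ must genuinely exploit the \emph{lower} density and requires a sharper tool.

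The key step, which is the real substance and the main obstacle, is Erd\H{o}s's theorem that every primitive set $A\subseteq\mathbb{N}\setminus\{1\}$ satisfies $\sum_{a\in A}\frac{1}{a\log a}<\infty$ with an absolute bound. I would prove it by the standard disjoint-family argument: for large $x$ and each $a\in A$ let $L_a$ be the set of $n\le x$ of the form $n=am$ in which every prime factor of $m$ exceeds the largest prime factor of $a$. Primitivity forces the sets $L_a$ to be pairwise disjoint (a common element would make one of the $a$'s divide another), and a Mertens-type estimate gives $|L_a|\gg\frac{x}{a\log a}$. Summing these disjoint subsets of $[1,x]$ yields $\sum_{a\in A}\frac{1}{a\log a}\le C$.

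Finally I would convert the convergence into a density statement. Writing $\underline\delta,\overline\delta$ for the lower and upper logarithmic densities, split the sum at a threshold $T$: for $a>T$ use $\frac1a=\log a\cdot\frac{1}{a\log a}\le\log x\cdot\frac{1}{a\log a}$, so that $\frac{1}{\log x}\sum_{a\in A,\,a\le x}\frac1a\le\frac{\log T+O(1)}{\log x}+\sum_{a\in A,\,a>T}\frac{1}{a\log a}$. Letting $x\to\infty$ and then $T\to\infty$ forces $\overline\delta(A)=0$, hence the logarithmic density of $A$ is $0$. Combining this with the standard chain of inequalities $\underline d(A)\le\underline\delta(A)\le\overline\delta(A)$ gives $\underline d(A)=0$, contradicting the hypothesis. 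Therefore $A$ cannot be primitive, so it contains distinct $x,xr\in A$ with $r\in\mathbb{N}$, which is the desired $2$-term geometric progression with integer common ratio.
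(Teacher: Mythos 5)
The paper offers no proof of this proposition at all---it is imported verbatim with a citation to Erd\H{o}s's 1935 note---and your argument is a correct reconstruction of precisely that source's proof: reduction to primitivity (an antichain under divisibility), the pairwise disjoint families $L_a$ of multiples whose cofactor has only prime factors exceeding the largest prime factor of $a$, a Mertens-type estimate giving $\sum_{a\in A}\frac{1}{a\log a}<\infty$, and the passage through logarithmic density and the chain $\underline d(A)\le\underline\delta(A)\le\overline\delta(A)$ to force $\underline d(A)=0$, with the Besicovitch remark correctly explaining why the easy upper-density bound of $1/2$ cannot suffice. The one sketch-level point to tighten is that $|L_a|\gg x/(a\log a)$ holds only for $x$ large relative to $a$, which is repaired in the standard way by summing over a fixed finite subset of $A$ and letting $x\to\infty$ (equivalently, by using the exact natural density $\frac{1}{a}\prod_{p\le P(a)}\bigl(1-\frac{1}{p}\bigr)$ of the set of integers whose $P(a)$-smooth part equals $a$), after which the disjointness bound and the rest of your argument go through unchanged.
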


 \begin{defn}[Additively piecewise syndetic set]
Let  $A\subset \mathbb{N}$. Then $A$ is called additively piecewise syndetic set if there exists  $l\in \mathbb{N}$ 
such that  for every $n\in \mathbb{N}$,  
there is a sequence $(x_i)_{i=1}^{n}$ in $A$ satisfying $0<x_{i+1}-x_{i}\leq l$  $\forall ~i\in [1,n-1]$. 
For example, every syndetic set is additively  piecewise syndetic set. 
\end{defn}
 \begin{prop}\cite[Corollary 2.17]{Beiglbock}\label{aps_beiglbock}
  If $A$ is an additively  piecewise syndetic set, then there exists a sequence $(y_{n})_{n=1}^{\infty}$ in $\mathbb{N}\backslash \{1\}$ such that for each $n\in \mathbb{N}$,  $ y_{n+1}\equiv 1  \pmod {\prod_{i=1}^{n} y_{i}}$  and $\prod_{i=1}^{n} y_{i} \in$ $A$. 
 \end{prop}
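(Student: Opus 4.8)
The plan is to translate the statement into the algebra of the Stone--\v{C}ech compactification $\beta\mathbb{N}$ and to build the sequence $(y_n)$ by a single recursive extraction driven by one carefully chosen idempotent ultrafilter. Writing $P_n=\prod_{i=1}^{n}y_i$, the two requirements ``$P_n\in A$'' and ``$y_{n+1}\equiv 1\pmod{P_n}$'' suggest separating the multiplicative bookkeeping (the products must stay in $A$) from the congruence bookkeeping (each new ratio must be $\equiv 1$ modulo the current product). For the latter I would introduce
\[
\mathbb{I}=\bigcap_{d\in\mathbb{N}}\overline{\,1+d\mathbb{N}\,},\qquad 1+d\mathbb{N}=\{x\in\mathbb{N}:x\equiv 1\pmod d\},
\]
the ultrafilters all of whose members meet every residue class $1\pmod d$ in a large set. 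Since each class $1+d\mathbb{N}$ is closed under multiplication and $\bigcap_i(1+d_i\mathbb{N})=1+\mathrm{lcm}(d_i)\mathbb{N}$ is infinite, $\mathbb{I}$ is a nonempty compact subsemigroup of $(\beta\mathbb{N},\cdot)$, so by the Ellis--Numakura lemma it contains a multiplicative idempotent $q=q\cdot q$; note that no point of $\mathbb{I}$ is principal, so $q$ is nonprincipal. The point of $q$ is that $1+d\mathbb{N}\in q$ for every $d$, so intersecting any $q$-large set with $1+P_n\mathbb{N}$ keeps it $q$-large and lets me force the required congruence for free at every stage.

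The recursion would maintain the single invariant $B_n:=\{y\in\mathbb{N}:P_ny\in A\}\in q$. Given it, I would use idempotency in its standard form: set $B_n^{\star}=\{y\in B_n:y^{-1}B_n\in q\}$, where $y^{-1}B_n=\{z:yz\in B_n\}$; since $q=q\cdot q$ one has $B_n^{\star}\in q$ and $y^{-1}B_n\in q$ for every $y\in B_n^{\star}$. Because $q\in\mathbb{I}$, the set $B_n^{\star}\cap(1+P_n\mathbb{N})$ is again in $q$, hence infinite, so I may pick $y_{n+1}$ in it with $y_{n+1}\ge 2$. Then $y_{n+1}\equiv 1\pmod{P_n}$ by construction, $P_{n+1}=P_ny_{n+1}\in A$ because $y_{n+1}\in B_n$, and the invariant is restored since $B_{n+1}=\{y:P_ny_{n+1}y\in A\}=y_{n+1}^{-1}B_n\in q$ as $y_{n+1}\in B_n^{\star}$. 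This reproduces exactly the ``iterated quotient'' mechanism behind finite-product sets, and it simultaneously delivers the divisibility chain, the membership $P_n\in A$, and the congruence condition.

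Everything then reduces to the base step: producing $P_1\in A$ with $P_1\ge 2$ and $B_1=P_1^{-1}A\in q$, and this is where I expect the real difficulty to sit and where the additive piecewise syndeticity of $A$ must finally be used. Multiplying out, $P_1^{-1}A\in q$ with $q\in\mathbb{I}$ demands that for every $d$ the progression $\{P_1+kP_1d:k\ge 1\}$ meet $A$, and that all these intersections be organized coherently into one idempotent of $\mathbb{I}$; since a single progression can miss a piecewise syndetic set, no naive choice of $P_1$ works and the hypothesis cannot be dispensed with. The clean way to see it is that $A$ additively piecewise syndetic means $\overline{A}$ meets the smallest ideal $K(\beta\mathbb{N},+)$, and the crux is to convert this additive largeness into \emph{multiplicative} largeness of some $P_1^{-1}A$ inside the semigroup $\mathbb{I}$ --- equivalently, to arrange that the anchoring set is multiplicatively central in $\mathbb{I}$ for a suitable $P_1\in A$. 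Establishing this bridge between the additive minimal ideal and the multiplicatively defined idempotent $q$ is the heart of the matter; once it is in place, the recursion of the previous paragraph finishes the proof, yielding the required sequence $(y_n)$ in $\mathbb{N}\setminus\{1\}$ with $\prod_{i=1}^{n}y_i\in A$ and $y_{n+1}\equiv 1\pmod{\prod_{i=1}^{n}y_i}$ for every $n$.
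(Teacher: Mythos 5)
The paper itself offers no proof of this proposition: it is imported verbatim from Beiglb\"{o}ck--Bergelson--Hindman--Strauss \cite[Corollary 2.17]{Beiglbock}, so your attempt can only be measured against that source, which does indeed work, as you propose, in the algebra of $(\beta\mathbb{N},+,\cdot)$. Your middle paragraph is correct and is essentially the mechanism used there: for a multiplicative idempotent $q$ lying in $\mathbb{I}=\bigcap_{d}\overline{1+d\mathbb{N}}$, the Galvin--Glazer star trick with the invariant $B_n=\{y:P_ny\in A\}\in q$, refined at each stage by intersecting with $1+P_n\mathbb{N}$, does deliver the congruences $y_{n+1}\equiv 1\pmod{P_n}$ together with $P_n\in A$ for all $n$; and your relegation of the base step to $P_1^{-1}A\in q$ rather than $A\in q$ is the right formulation (for $A=2\mathbb{N}$, which is syndetic, no $q\in\mathbb{I}$ can contain $A$, so the na\"{i}ve base step is impossible).

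The genuine gap is exactly the step you flag and then do not carry out: producing $P_1\in A$ and a multiplicative idempotent $q\in\mathbb{I}$ with $P_1^{-1}A\in q$. This is not a technical loose end --- it \emph{is} the content of the proposition; everything in your first two paragraphs is standard once such a pair $(P_1,q)$ exists, and additive piecewise syndeticity enters nowhere else. Saying that one must ``convert additive largeness into multiplicative largeness inside $\mathbb{I}$'' or ``arrange that the anchoring set is multiplicatively central in $\mathbb{I}$'' names the goal, not an argument. In the cited source the bridge is supplied by a genuinely nontrivial theorem, namely that $\mathrm{cl}\,K(\beta\mathbb{N},+)$ is a left ideal of the semigroup $(\beta\mathbb{N},\cdot)$ (proved via $x\cdot \mathrm{cl}\,K(\beta\mathbb{N},+)\subseteq \mathrm{cl}\,K(\beta\mathbb{N},+)$ for $x\in\mathbb{N}$ and continuity of right translations), combined with observations such as: every additive idempotent $p$ satisfies $n\mathbb{N}\in p$ for all $n$, whence $1+p\in\mathbb{I}\cap K(\beta\mathbb{N},+)$ for any minimal additive idempotent $p$, so that $\mathbb{I}\cap \mathrm{cl}\,K(\beta\mathbb{N},+)$ is a nonempty compact subsemigroup of $(\beta\mathbb{N},\cdot)$ in which the needed idempotent can be located and then tied to the given piecewise syndetic $A$. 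Without proving some version of this, your text is an accurate reduction of the proposition to its hardest part, not a proof. Separately, one smaller but real slip: your claim that no point of $\mathbb{I}$ is principal is false --- since $1\equiv 1\pmod d$ for every $d$, the principal ultrafilter at $1$ lies in $\mathbb{I}$ and is a multiplicative idempotent, so Ellis--Numakura applied to $\mathbb{I}$ as you define it may return this trivial idempotent, for which your recursion collapses (you could never choose $y_{n+1}\geq 2$ from the member $\{1\}$). The fix is easy --- work instead in $\bigcap_{d}\overline{(1+d\mathbb{N})\setminus\{1\}}$, which is still a nonempty compact subsemigroup --- but as written the nonprincipality assertion is unjustified.
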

 \noindent
In other words, an additively piecewise syndetic set  contains configurations of the type $\{x,xy\}$ for some $x,y\in \mathbb{N}$. But there exists some additively piecewise syndetic set which does not contain configurations of the type $\{x,xy,xy^2\}$ with $x,y\in \mathbb{N}$.
One can get this type of set inside the collection of  thick sets where a thick set is a subset of the natural numbers containing arbitrarily long intervals in $\mathbb{N}$. 
Using the fact that every thick set is additively piecewise syndetic,  Proposition
\ref{Thick_set_3gp2} 
guarantees the existence of a piecewise syndetic set not containing configurations of the type
$\{x,xy, xy^2\}$ with $x,y\in \mathbb{N}$. 
   \begin{prop} \cite[Theorem 3.5]{Beiglbock}\label{Thick_set_3gp2}
   There is a thick subset $A$ of $\mathbb{N}$ such that  there do not exist $a\in A$ and $r\in \mathbb{Q}\setminus\{1\}$ such that $ar\in A$ and $ar^2\in A$.
   \end{prop}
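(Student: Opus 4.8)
The plan is to construct $A$ as a disjoint union of blocks of consecutive integers, $A=\bigcup_{n\ge 1}I_n$ with $I_n=[M_n,M_n+L_n]\cap\mathbb{N}$, where the lengths satisfy $L_n\to\infty$ (forcing thickness) and the left endpoints $M_n$ are chosen inductively to grow fast enough to kill every geometric progression. First I would record the arithmetic reformulation. Since $a,ar,ar^2\in\mathbb{N}$ forces $r>0$, and $r\neq 1$, after possibly replacing $r$ by $1/r$ any forbidden triple becomes a triple $x<xr<xr^2$ of naturals with $r=p/q>1$ in lowest terms; integrality of $xr^2=xp^2/q^2$ forces $q^2\mid x$, so in particular $x\ge q^2$. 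As the common ratio of a natural-number triple is automatically rational, avoiding such triples is the entire content.

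Second, I would isolate a single-block estimate: if $M\ge L^2$, then $[M,M+L]$ contains no three-term progression. Indeed, if $x<xr<xr^2$ all lie in $[M,M+L]$, then on one hand $x(r^2-1)\le L$ together with $x\ge M$ gives $r^2-1\le L/M$, whence (using $r^2-1>2/q$) $q>2M/L$; on the other hand $q^2\le x\le M+L\le 2M$ gives $q\le\sqrt{2M}$. Since $M\ge L^2$ yields $L\le\sqrt M$, these read $2\sqrt M<q\le\sqrt2\,\sqrt M$, a contradiction. Hence imposing $M_n\ge L_n^2$ removes every progression lying inside one block.

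Third comes the cross-block analysis, which I expect to be the main obstacle. At stage $n$ the set $A_{n-1}=I_1\cup\cdots\cup I_{n-1}$ is finite, and because the blocks are increasing, any progression newly produced by adjoining $I_n$ has its largest term $xr^2$ in $I_n$. I would split into cases by how many of $x,xr$ already lie in $A_{n-1}$. If both do, the finitely many candidate top terms $xr^2=(xr)^2/x$ are all bounded by $(\max A_{n-1})^2$, so it suffices to begin $I_n$ beyond this bound. If only $x\in A_{n-1}$ while $xr,xr^2\in I_n$, then $q^2\mid x\le\max A_{n-1}$ bounds $q$, so $r-1\ge 1/q$ exceeds a constant depending only on $A_{n-1}$, and the span estimate $xr(r-1)\le L_n$ forces $xr\le L_n\sqrt{\max A_{n-1}}$, contradicting $xr\ge M_n$ once $M_n$ is large; the remaining arrangement, $xr\in A_{n-1}$ with $x\in I_n$, is vacuous since $x<xr$. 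Collecting the three thresholds, I would take
\[
M_n>\max\bigl\{\,L_n^2,\ (\max A_{n-1})^2,\ L_n\sqrt{\max A_{n-1}}\,\bigr\},\qquad L_n=n .
\]
A straightforward induction then shows $A$ contains no three-term geometric progression, while $L_n\to\infty$ makes $A$ thick. The one delicate point to get right is the bookkeeping guaranteeing that \emph{every} forbidden triple is caught at the finite stage where its largest term first appears, so that the global statement follows from the stagewise guarantees.
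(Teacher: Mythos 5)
This proposition is not proved in the paper at all: it is quoted verbatim from Beiglb\"{o}ck--Bergelson--Hindman--Strauss \cite[Theorem 3.5]{Beiglbock}, so there is no in-paper argument to compare yours against; judged on its own merits, your proof is correct, and it is in the same spirit as the original construction in \cite{Beiglbock} (rapidly separated blocks of consecutive integers of growing length). The arithmetic core checks out: reducing to $x<xr<xr^2$ with $r=p/q>1$ in lowest terms, integrality of $xp^2/q^2$ gives $q^2\mid x$, hence $q\le\sqrt{x}$; inside a block $[M,M+L]$ with $M\ge L^2$ this collides with $x(r^2-1)\le L$ and $r^2-1>2/q$ (since $(p-q)(p+q)/q^2\ge(2q+1)/q^2$), yielding the contradictory pair $q>2\sqrt{M}$ and $q\le\sqrt{2M}$; and your three cross-block cases are exhaustive and correctly handled, in particular $x,xr\in A_{n-1}$ is killed by $xr^2=(xr)^2/x\le(\max A_{n-1})^2$, and the straddling case by $xr\le qL_n\le L_n\sqrt{\max A_{n-1}}<M_n$. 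The ``delicate point'' you flag is in fact routine: a triple in $A$ is a finite set, so it lies in $A_n$ for some $n$; taking $n$ minimal, the largest term exceeds $\max A_{n-1}$ and hence lies in $I_n$, so the stagewise case analysis catches every triple. Only trivial bookkeeping remains (the base case $A_0=\varnothing$, and noting $(\max A_{n-1})^2\ge\max A_{n-1}$ so the blocks are genuinely increasing), and $|I_n|=n+1\to\infty$ gives thickness, so your argument is a complete and self-contained proof of the cited result.
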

 
The following weaker version of Question \ref{question1} is still an open question. 
\begin{question}\label{question2}
  If $A\subset \mathbb{N}$ is  syndetic, do there exist $x,y\in \mathbb{N}$ such that $\{x, xy^2\}\subset A$?
\end{question}

Our first result relates to the above question and gives more information about $2$-term geometric
progressions with integer common ratios in syndetic sets.

\begin{thm}\label{thm1}
 Let $k\in \mathbb{N}$ and $H_0\in\{\mathbb{P}, \mathbb{N}\setminus\mathbb{P}\}$. Then any syndetic set contains a $2$-term geometric 
 progressions  with common ratio $n^kr$ for some $n\in H_0\setminus\{1\}$ and  $r\in \mathbb{N}$ with $r\equiv 1 \pmod{n}$.
 \end{thm}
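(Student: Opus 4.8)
The plan is to prove the sharper statement that the two‑term progression can be taken to be a pair of consecutive partial products coming from Proposition \ref{aps_beiglbock}, with the single generator between them already of the required shape. Since a syndetic set is additively piecewise syndetic, Proposition \ref{aps_beiglbock} furnishes a sequence $(y_i)_{i\ge 1}$ in $\mathbb{N}\setminus\{1\}$ with $P_m:=\prod_{i=1}^m y_i\in A$ for all $m$ and $y_{m+1}\equiv 1\pmod{P_m}$. The first thing I would record is the structural consequence that the $y_i$ are pairwise coprime and, more precisely, that $y_j\equiv 1\pmod{y_i}$ whenever $i<j$; this is immediate from $y_j\equiv 1\pmod{P_{j-1}}$ together with $y_i\mid P_{j-1}$. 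The progressions at my disposal are then $\{P_m,P_{m'}\}$ with ratio $\prod_{i=m+1}^{m'}y_i\equiv 1\pmod{P_m}$, and the whole problem is to locate one whose ratio has the form $n^{k}r$ with $n\in H_0\setminus\{1\}$ and $r\equiv 1\pmod n$.

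The mechanism I would use to produce the $k$-th power is to strengthen the inductive selection underlying Proposition \ref{aps_beiglbock} at one step. Suppose that at stage $m$ we can choose the next generator so that, for some $n\in H_0\setminus\{1\}$ with $\gcd(n,P_m)=1$,
\[
 y_{m+1}\equiv 1 \pmod{P_m}\qquad\text{and}\qquad y_{m+1}\equiv n^{k}\pmod{n^{k+1}},
\]
while still keeping $P_{m+1}=P_m y_{m+1}\in A$. By the Chinese Remainder Theorem these two congruences are consistent (their moduli are coprime), and together they say exactly that $y_{m+1}=n^{k}r$ with $r=y_{m+1}/n^{k}\equiv 1\pmod n$. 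Hence $\{P_m,P_{m+1}\}\subset A$ is a two-term geometric progression whose common ratio $y_{m+1}$ is of the advertised form, which proves Theorem \ref{thm1}. Taking $n$ to be a prime (respectively a composite) coprime to $P_m$ handles the two alternatives $H_0=\mathbb{P}$ and $H_0=\mathbb{N}\setminus\mathbb{P}$. Moreover the condition $r\equiv 1\pmod n$ is stable under enlarging the block: once $n\mid P_{m+1}$, every later generator satisfies $y_i\equiv 1\pmod n$, so any longer ratio $\prod_{i=m+1}^{m'}y_i$ is again of the form $n^{k}r'$ with $r'\equiv 1\pmod n$.

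The heart of the matter — and the step I expect to be the main obstacle — is justifying that such a generator exists, i.e.\ that $A$ (which is only assumed additively piecewise syndetic) still meets the progression $\{P_m n^{k}(1+sn):s\ge 0\}$ inside the residue class $1\pmod{P_m}$. This cannot be read off from syndeticity alone, since a syndetic set may avoid all perfect $k$-th powers; what rescues the argument is that only the ratio, not the endpoints, is required to carry the $k$-th power. I would establish existence by re-entering the proof of Proposition \ref{aps_beiglbock}: that construction draws $y_{m+1}$ from a member of a minimal idempotent $p=p\cdot p$ of $(\beta\mathbb{N},\cdot)$ associated with $A$, and the extra constraints above amount to requiring that the set $E=\{n^{k}r:n\in H_0\setminus\{1\},\,r\equiv 1\pmod n\}$ together with the subsemigroup $1+P_m\mathbb{N}$ also lie in $p$. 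Including $1+P_m\mathbb{N}$ is standard — it is precisely what already yields $y_{m+1}\equiv 1$ — so the genuinely new point is to show $E\in p$, which I would attempt by verifying that $E$ is multiplicatively central, exhibiting a suitable finite-products set inside it and matching the resulting idempotent with the one carrying $A$. As a fallback, should the idempotent bookkeeping prove awkward, I would instead aim for a counting argument in the spirit of Proposition \ref{erdos}: assuming $A$ contains no progression with ratio in $E$ and deducing $\overline d(A)=0$, contradicting the positive density of a syndetic set.
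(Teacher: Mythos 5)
Your reduction is cleanly set up, and the CRT observation is correct: if one could arrange $y_{m+1}\equiv 1\pmod{P_m}$ and $y_{m+1}\equiv n^{k}\pmod{n^{k+1}}$ with $\gcd(n,P_m)=1$ while keeping $P_{m+1}\in A$, then indeed $y_{m+1}=n^{k}r$ with $r\equiv 1\pmod n$ and the theorem would follow. But the step you yourself flag as the main obstacle is a genuine gap, and neither of your proposed repairs can close it. For the ultrafilter route, you would need the set $E=\{n^{k}r\colon n\in H_0\setminus\{1\},\ r\equiv 1\pmod n\}$ to lie in the \emph{specific} idempotent driving the construction behind Proposition \ref{aps_beiglbock}; showing $E$ is multiplicatively central would only place it in \emph{some} minimal idempotent, not the one at hand. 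Worse, $E$ is not even multiplicatively IP*, so no soft membership argument is available: for $H_0=\mathbb{P}$, the finite-products set generated by $\{2^{k+1},2^{2(k+1)},2^{4(k+1)},\dots\}$ consists of powers $2^{j}$ with $j$ a positive multiple of $k+1$, while $2^{j}\in E$ forces $n=2$ and $j=k$ exactly; hence this FP set misses $E$ entirely, and there exist idempotents avoiding $E$. Your density fallback is provably hopeless for $k\geq 2$: the squarefree numbers have density $6/\pi^{2}>0$ (upper and lower) yet contain no configuration $\{x,xn^{k}r\}$ with $n>1$, since $n^{k}\mid xn^{k}r$ already destroys squarefreeness. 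So no argument in the spirit of Proposition \ref{erdos} that uses only positive density can prove Theorem \ref{thm1}; the bounded-gap hypothesis must be exploited combinatorially, exactly as the paper itself warns with this example in the introduction.

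This is precisely where the paper's proof does its real work, and it takes a route entirely different from strengthening Proposition \ref{aps_beiglbock} (which it cites only for context). The paper argues by contradiction: if $A\notin\mathcal{S}_{k,2l+1,H_0}$, then Lemma \ref{Main_prop1} and Corollary \ref{Main_prop1_Maincor1} (Chinese remainder theorem applied to moduli $m_i=x_if(x_i)^{k}$ built from a pairwise prime set $B\subset A/u$ and fresh elements of $H_0$) force $A$ to avoid infinitely many long homothetic blocks $u[z_t,z_t+h-1]$. Syndeticity then harvests elements of $A$ nearby, and the Triveni-triplet machinery (Zorn's lemma for order one in Proposition \ref{Main_prop2}, then Propositions \ref{Higher_triveni_Main_prop} and \ref{Existence_triveni_higher}, using Lemmas \ref{Higher_triveni_mainlemma1} and \ref{Higher_triveni_mainlemma2} to control gcd's via the finite set $T(l)$) organizes those elements into strictly growing families of pairwise prime sets, until the finiteness of $T(l)$ yields a contradiction. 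As it stands, your proposal establishes nothing beyond what Proposition \ref{aps_beiglbock} already gives, with the decisive existence claim left unproven and, along the two avenues you suggest, unprovable.
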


  The next result  of this paper confirms an affirmative answer to
  Question \ref{question2} in the case of $2$-syndetic sets.

\begin{thm}\label{thm2}
 A $2$-syndetic set contains infinitely many $2$-term geometric progressions whose common ratios are  perfect squares. 
\end{thm}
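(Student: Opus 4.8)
The plan is to prove the (apparently stronger but equivalent) assertion that for every $N\in\mathbb{N}$ one can locate a $2$-term geometric progression with perfect-square common ratio whose \emph{both} terms exceed $N$; since $N$ is arbitrary, this yields infinitely many such progressions. The engine is a single dichotomy. Let $A$ be $2$-syndetic, so that its complement $A^{c}$ contains no two consecutive integers. For any $x\in A$ and any integer $s\ge 2$, either $s^{2}x\in A$, in which case $\{x,s^{2}x\}$ is precisely the progression we want (ratio $s^{2}$), or $s^{2}x\in A^{c}$, and then the absence of two consecutive integers in $A^{c}$ forces both $s^{2}x-1\in A$ and $s^{2}x+1\in A$. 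Thus from one member of $A$ I can either harvest a square-ratio progression or push a chosen integer into $A^{c}$ and recover two new, nearby members of $A$ on which the dichotomy can be iterated.

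Fix $N$ and choose $a\in A$ with $a>N$. I would run two short chains of this dichotomy, engineered to terminate at two consecutive integers. Along the first chain: examine $64a=8^{2}a$; if $64a\in A$ we are done with $\{a,64a\}$, otherwise $64a+1\in A$, and then $49(64a+1)=3136a+49$ either lies in $A$ (giving $\{64a+1,\,3136a+49\}$ of ratio $49$) or lies in $A^{c}$. Along the second chain: examine $49a=7^{2}a$; if $49a\in A$ we are done, otherwise $49a+1\in A$, then $4(49a+1)=196a+4$ is either in $A$ (progression $\{49a+1,\,196a+4\}$ of ratio $4$) or in $A^{c}$, in which case $196a+3\in A$ and finally $16(196a+3)=3136a+48$ is either in $A$ (progression $\{196a+3,\,3136a+48\}$ of ratio $16$) or in $A^{c}$. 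Every ``done'' branch exhibits a square-ratio progression with both terms exceeding $N$.

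If no branch produces a progression, then the two chains have forced $3136a+48\in A^{c}$ and $3136a+49\in A^{c}$, i.e. two consecutive integers in $A^{c}$, contradicting $2$-syndeticity. Hence for each $N$ some branch succeeds, and letting $N$ grow gives infinitely many progressions with perfect-square ratio. The step I expect to be the genuine obstacle is exactly the design of the two chains: the dichotomy only ever lets me multiply a member of $A$ by a perfect square and then shift by $\pm 1$, so reaching two \emph{consecutive} integers requires a single value $3136=56^{2}$ carrying two essentially different factorizations, $56=7\cdot 8$ and $56=4\cdot 2\cdot 7$, together with the coincidence that the accumulated $\pm 1$-shifts land on $48=4^{2}\cdot 3$ and $49=7^{2}$. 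A short size check shows depth three is unavoidable: matching the coefficient of $a$ in two chains of depth at most two would force a relation such as $t^{2}-s^{2}=\pm1$ or $t^{2}+s^{2}=1$ with $s,t\ge 2$, none of which has a solution.
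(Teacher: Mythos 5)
Your proof is correct, and it takes a genuinely different route from the paper. I checked the tree in full: if every ``in $A$'' branch fails, your two chains force $49(64a+1)=3136a+49\notin A$ and $16(196a+3)=3136a+48\notin A$, two consecutive integers missing from a $2$-syndetic set, which is impossible; the arithmetic ($49\cdot 64=16\cdot 196=3136$, shifts $49\cdot 1=49$ and $16\cdot 3=48$) is right, each dichotomy step uses $2$-syndeticity legitimately, and choosing $a\in A$ with $a>N$ (possible since a $2$-syndetic set is infinite) correctly upgrades the single configuration to infinitely many. The paper argues quite differently, twice over: its first method reduces via Lemma \ref{thm2lemma1} to the case where an odd square $m^2$ lies in $S$, then works modulo $m$ (choosing $z$ odd with $z^2\equiv 0\pmod m$) and exploits identities such as $(2m^2+1)^2-1=4m^2(m^2+1)$ to funnel into Lemma \ref{thm2lemma2}; its second method splits on whether $S$ contains infinitely many consecutive pairs $\{a,a+1\}$, handling that case with the identity $a(4a+3)^2+1=(a+1)(4a+1)^2$ of Lemma \ref{thm2lemma3} and the complementary case via an infinite arithmetic progression of difference $2$, which contains an infinite geometric progression. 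Your argument is more elementary than either: no congruences, no auxiliary lemmas, just a depth-three membership tree whose design hinges on the coincidence that $3136=56^2$ factors compatibly along two chains whose accumulated shifts land on the consecutive values $48$ and $49$. It also yields a refinement the paper does not state: the common ratio can always be taken from the fixed finite set $\{4,16,49,64\}$. Conversely, the paper's methods buy control over the relative sizes of the two parameters --- the second method produces ratios $(4a+3)^2$ or $(4a+1)^2$ with $r>x$, and the remark after the first method extracts $r<x$ under an extra hypothesis --- which your bounded-ratio tree cannot replicate. Your closing remark that depth two is insufficient (since it would require $t^2-s^2=\pm 1$ with $s,t\ge 2$) is a correct and pleasant justification that the depth-three construction is not an artifact.
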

The proof of Theorem \ref{thm1} uses Chinese remainder theorem extensively.
In Section \ref{section2},
 we show that generating pairwise prime sets [see Definition \ref{pairwise_prime}] in a syndetic set is  enough for finding configurations as required 
 in Theorem \ref{thm1}. Section \ref{section3} describes about Triveni triplets
 [see Definition \ref{triveni}] to understand  pairwise prime subsets of a syndetic set.
 Zorn's lemma guarantees the  existence of Triveni triplets of order one with respect to a given syndetic set and then repeated use 
 of Chinese remainder 
 theorem  at various stages generates Triveni triplets of higher order. Using these observations, Section \ref{section4} explains  the proof of Theorem \ref{thm1}.
  Section \ref{section5} describes the proof of Theorem \ref{thm2}
 by producing infinitely many explicit geometric progressions.
\subsection*{Notation}\label{notation}

Let $\mathbb{Q}$, $\mathbb{N}, \mathbb{P}$ and $\mathbb{Z}$ denote, respectively, the set of rational numbers, the 
set of positive integers, the set of prime numbers and the set of integers.
For $k\in \mathbb{N}$ and $H\subset \mathbb{N}$, $\mathcal{S}_{k,l,H}$ denotes the collection of $l$-syndetic sets that
contain a
configuration of the form $\{x,xn^kr\}$, where 
$n\in H$, $r\in \mathbb{N}$ satisfying $r\equiv 1\pmod{n}$.
Let $[a, b] :=$ 
$\{x \in \mathbb{Z}: a \leq x \leq b\}$. Let $\subsetneq$  denote the ``proper subset of''.
For $A\subset \mathbb{N}$ and $x\in\mathbb{N}$, $xA=Ax:=\{nx\colon n\in A\}$. 
 
\section{Chinese remainder theorem and syndetic sets } \label{section2}
\begin{defn}[Pairwise prime set]\label{pairwise_prime}
 Let $B\subset \mathbb{N}$. Then $B$ is called a pairwise prime set if $\gcd(a,b)=1 ~\forall~ a,b\in B$ with $a\neq b$. 
\end{defn}

 Using the Chinese Remainder Theorem\cite{Nathanson}, the next lemma helps us to prove   
 Theorem  \ref{thm1} in the case of syndetic sets containing  pairwise prime sets with arbitrarily large cardinalities. 
\begin{lem}\label{Main_prop1}
 Let $h\in \mathbb{N}$, let $m_{1}, m_{2}, ..., m_{h}$ be pairwise co-prime integers in $\mathbb{N}$ and let $t_{1}, t_{2}, ..., t_{h}$
 be arbitrary elements in $\mathbb{N}\cup \{0\}$.
 Then $\exists$ $u_0\in \mathbb{N}$ such that 
  if $u_t=u_0+t\left( \prod_{i=1}^hm_i^2\right) $ with $t\in \mathbb{N}$, then there exists $(r_{i,t})_{i=1}^{h}$ in  $\mathbb{N}$
  satisfying  $u_t+t_{i}=r_{i,t}m_{i}$ and $r_{i,t} \equiv 1\pmod{m_{i}}$ $~\forall~ i\in [1, h]$.
\end{lem}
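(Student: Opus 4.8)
The plan is to collapse both requirements on $u_t$ into a single system of congruences and then apply the Chinese Remainder Theorem. The crucial observation is that, for a fixed $i$, the two conditions ``$m_i \mid u_t + t_i$'' and ``the quotient $r_{i,t} := (u_t+t_i)/m_i$ satisfies $r_{i,t}\equiv 1 \pmod{m_i}$'' are together equivalent to a single congruence modulo $m_i^2$. Indeed, if $u_t + t_i = r_{i,t}m_i$ with $r_{i,t} = 1 + k m_i$ for some $k \in \mathbb{N}\cup\{0\}$, then $u_t + t_i = m_i + k m_i^2$, so the pair of conditions says precisely that $u_t + t_i \equiv m_i \pmod{m_i^2}$, i.e. $u_t \equiv m_i - t_i \pmod{m_i^2}$. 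This is the step that explains why the increment in the definition of $u_t$ is $\prod_{i=1}^h m_i^2$ and not merely $\prod_{i=1}^h m_i$: one must control the quotient, which is a mod-$m_i^2$ phenomenon.

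Next I would solve the resulting system. Since $m_1,\dots,m_h$ are pairwise coprime, so are $m_1^2,\dots,m_h^2$, and the Chinese Remainder Theorem produces a single residue class modulo $M := \prod_{i=1}^h m_i^2$ consisting of all integers $u$ with $u \equiv m_i - t_i \pmod{m_i^2}$ for every $i \in [1,h]$. I would take $u_0$ to be any positive representative of this class, adding a multiple of $M$ if necessary so that $u_0 \in \mathbb{N}$.

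Finally I would verify that this single $u_0$ works simultaneously for every $t$. Because $M$ is divisible by each $m_i^2$, adding $tM$ leaves the residue unchanged, so $u_t = u_0 + tM \equiv u_0 \equiv m_i - t_i \pmod{m_i^2}$ for all $t \in \mathbb{N}$ and all $i \in [1,h]$. By the reformulation above, this yields both $m_i \mid u_t + t_i$ and $r_{i,t}\equiv 1 \pmod{m_i}$; and since $u_t + t_i$ is a positive multiple of $m_i$, the quotient $r_{i,t}$ indeed lies in $\mathbb{N}$, as required. There is no substantial obstacle here: the entire content sits in the first paragraph's reformulation and in noting that the same $u_0$ serves all $t$ at once, both of which become immediate once one tracks the hypotheses modulo $m_i^2$ rather than modulo $m_i$.
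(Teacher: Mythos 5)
Your proposal is correct and follows essentially the same route as the paper: both reduce the two conditions to the single congruence $u_t + t_i \equiv m_i \pmod{m_i^2}$ and apply the Chinese Remainder Theorem to the pairwise coprime moduli $m_i^2$, with the increment $\prod_{i=1}^h m_i^2$ preserving all residues. Your write-up is in fact slightly more careful than the paper's, since you explicitly verify the equivalence of the reformulation and the positivity of the quotients $r_{i,t}$.
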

\begin{proof}
 Consider the congruences
 $ x+ t_{i}\equiv m_i \pmod {m_{i}^2} ~\forall~ i\in [1, h].$

 Since  $\{m_i\colon i\in [1,h]\}$ is a pairwise prime subset in $\mathbb{N}$ and
 $\{ t_{i}\colon i\in[1,h]\}\subset \mathbb{N}\cup\{0\}$, the Chinese remainder theorem ensures the existence of  $u_0\in \mathbb{N}$ 
 such that if  $u_t=u_0+tw^2$  for  $w:=\prod_{i=1}^{h} m_{i}$ and $t\in \mathbb{N}$, then
  $u_t+t_{i}\equiv m_{i} \pmod{m_i^2} ~\forall~ i\in [1, h]. $ 
 Therefore there exists a sequence $ (r_{i,t})^{h}_{i=1}$ in $\mathbb{N}$ such that 
 $u_t+t_{i}=r_{i,t} m_{i} \text{ and } r_{i,t}\equiv 1\pmod{m_i} ~\forall~ i\in [1, h].$
\end{proof}

From the above lemma, we get the following important corollary which will be used recursively in the proof of Theorem \ref{thm1}.
\begin{cor}\label{Main_prop1_Maincor1}
 Let $h,k\in \mathbb{N}$, $A\subset\mathbb{N}$ be an infinite set and $B\subset \mathbb{N}$ be a pairwise prime set such that $|B|=h$ and $uB\subset A$  for some $u\in\mathbb{N}$.
 If $H$ be a pairwise prime subset of $\mathbb{N}$ such that $|H|=|B|$ and $\gcd(a,b)=1~\forall~ a\in H, b\in B$, then
 at least one of the following is true.
 \begin{enumerate}
  \item $A$ contains a configuration of the type $\{x,xn^kr\}$ for some $x,r\in \N$, $n\in H$ and $r\equiv 1\pmod{n}$.
  \item  There exists $z'\in \mathbb{N}$  such that $u[z_t, z_t+h-1]\cap A=\varnothing$ $ ~\forall ~t\in \N$ where $z_t=z'+t\left(\prod_{x\in B}x^2\right)\left(\prod_{x\in H}x^{2k}\right).$
 \end{enumerate}
\end{cor}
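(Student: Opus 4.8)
The plan is to pair up $B$ and $H$ and then to use the Chinese remainder theorem to arrange that the two-term progressions emanating from the elements of $uB$ land precisely on a block of $h$ consecutive multiples of $u$. Fix enumerations $B=\{b_1,\dots,b_h\}$ and $H=\{n_1,\dots,n_h\}$, which simultaneously fixes a bijection $b_i\leftrightarrow n_i$. Since $uB\subset A$, each $ub_i$ is available as a first term, and for a fixed $i$ the candidate second term will be $ub_i\,n_i^{k}r_i$; this has ratio $n_i^{k}r_i$ of the form demanded in alternative (1) provided $r_i\equiv 1\pmod{n_i}$. The key idea is to choose the cofactors $r_i$ (depending on a parameter $t$) so that the $h$ numbers $b_i n_i^{k}r_{i,t}$ are exactly the consecutive integers $z_t,z_t+1,\dots,z_t+h-1$; then the $h$ candidate second terms fill out precisely the block $u[z_t,z_t+h-1]$, and the dichotomy reads off from whether or not this block meets $A$.

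To realise this I would impose, for each $i\in[1,h]$, the congruence
\[ z\equiv b_i n_i^{k}-(i-1)\pmod{b_i n_i^{k+1}}. \]
Because $B$ is pairwise prime, $H$ is pairwise prime, and $\gcd(a,b)=1$ for all $a\in H,\ b\in B$, the moduli $b_i n_i^{k+1}$ $(1\le i\le h)$ are pairwise coprime, so the Chinese remainder theorem — in the form already exploited in Lemma \ref{Main_prop1} — yields a common solution, which I take to be a positive integer $z'$. Setting $M=\left(\prod_{x\in B}x^2\right)\left(\prod_{x\in H}x^{2k}\right)$ and $z_t=z'+tM$, the crucial point is that $b_i n_i^{k+1}\mid M$ for every $i$: indeed $b_i\mid b_i^{2}\mid M$, and since $k\ge 1$ we have $n_i^{k+1}\mid n_i^{2k}\mid M$. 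Hence every $z_t$ inherits all $h$ congruences, so for each $t$ and each $i$ one may write $z_t+(i-1)=b_i n_i^{k}r_{i,t}$ with $r_{i,t}\in\mathbb{N}$ and $r_{i,t}\equiv 1\pmod{n_i}$ (the latter because $z_t+(i-1)\equiv b_i n_i^{k}\pmod{b_i n_i^{k+1}}$ forces $r_{i,t}=1+n_i s$ for some $s\ge 0$).

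With this in hand the dichotomy is immediate. As $i$ ranges over $[1,h]$ the integers $z_t+(i-1)$ range over all of $z_t,\dots,z_t+h-1$, so multiplying by $u$ gives $u[z_t,z_t+h-1]=\{u b_i n_i^{k}r_{i,t}:1\le i\le h\}$. If $u[z_t,z_t+h-1]\cap A=\varnothing$ for every $t\in\mathbb{N}$, then alternative (2) holds with this $z'$. Otherwise there exist $t$ and $i$ with $u\,(z_t+i-1)=u b_i n_i^{k}r_{i,t}\in A$; combining this with $ub_i\in A$ (because $uB\subset A$) exhibits the two-term geometric progression $\{ub_i,\ ub_i\,n_i^{k}r_{i,t}\}\subset A$ of common ratio $n_i^{k}r_{i,t}$, with $n_i\in H$ and $r_{i,t}\equiv 1\pmod{n_i}$, which is alternative (1).

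The genuinely delicate points are all bookkeeping. First, one must pick the congruence data so that the step modulus $M$ prescribed in the statement — with its exponents $2$ and $2k$ — is divisible by each $b_i n_i^{k+1}$; this is exactly where the hypothesis $k\ge 1$ and the cross-coprimality of $B$ and $H$ are consumed, and it is the only place the argument could fail. Second, one should confirm the cofactors $r_{i,t}=(z_t+i-1)/(b_i n_i^{k})$ are genuinely positive integers for all $t$, which follows since $M\ge b_i n_i^{k}$ makes $z_t+(i-1)\ge b_i n_i^{k}$ throughout. Everything else is a direct comparison of the consecutive block $[z_t,z_t+h-1]$ against the factorisations produced by the Chinese remainder theorem.
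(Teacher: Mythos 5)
Your proof is correct and follows essentially the same route as the paper: pair $B$ with $H$ via a bijection, use the Chinese remainder theorem to realise the block $z_t,\dots,z_t+h-1$ as the products $b_in_i^{k}r_{i,t}$ with $r_{i,t}\equiv 1\pmod{n_i}$, and dichotomise on whether $u[z_t,z_t+h-1]$ meets $A$. The only (harmless) deviation is that the paper invokes Lemma \ref{Main_prop1} with the pairwise coprime moduli $m_i=b_in_i^{k}$, i.e.\ congruences modulo $m_i^2=b_i^2n_i^{2k}$, whose product is exactly the prescribed step $M=\left(\prod_{x\in B}x^2\right)\left(\prod_{x\in H}x^{2k}\right)$ and which yield the stronger conclusion $r_{i,t}\equiv 1\pmod{b_in_i^{k}}$, whereas you work with the smaller moduli $b_in_i^{k+1}$ and therefore need --- and correctly supply --- the divisibility check $b_in_i^{k+1}\mid M$, valid since $k+1\leq 2k$ for $k\in\mathbb{N}$.
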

\begin{proof}
Let $B=\{x_1,x_2,\cdots ,x_h\}$. Since $|B|=|H|$, there exists a bijective map $f\colon B\rightarrow H$. Define
$m_i:=x_if(x_i)^k~ \forall ~ i\in [1,h].$

 Since  $H$ and $B$ are  pairwise prime sets satisfying $\gcd(a,b)=1~\forall~ a\in H, b\in B$, we get that the set $\{m_i\colon i\in [1,h]\}$ is a pairwise prime set.
So,  Lemma \ref{Main_prop1} gives $z'\in \mathbb{N}$ which satisfies the property that if
 $z_t=z' + t\left(\prod_{i=1}^{h}m_i^2\right)$ for $t\in \mathbb{N}$, 
then there exists a sequence $(r_{i,t})_{i=1}^{h}$ in $\mathbb{N}$ such that 
\begin{equation}
z_t+i-1=r_{i,t}m_i~~\text{ and }~ ~ r_{i,t}\equiv 1\pmod{m_i} ~\forall ~ i\in [1,h].  \label{eq4}
\end{equation}
Since $uB\subset A$ for some $u\in\mathbb{N}$, we have $y_i:=ux_i\in A$  $ ~\forall ~i\in [1,h]$. Hence, applying the definitions
of $y_i$ and $m_i$
 in the   equation \eqref{eq4}, we have
$$uz_t+u(i-1)=y_i f(x_i)^k r_{i,t} ~~\text{ and }~ ~ r_{i,t}\equiv 1\pmod{f(x_i)} ~\forall ~ i\in [1,h].$$

If $u[z_t, z_t+h-1]\cap A \neq \varnothing$ for some $t\in \mathbb{N}$, then $\exists ~$ $j\in [1,h]$ such that 
$y_j f(x_j)^k r_{j,t}\in A$ and $r_{j,t}\equiv 1\pmod{f(x_j)}$ where $y_j\in A$ and $f(x_j)\in H$. This completes the proof.
 \end{proof} 
For any finite pairwise prime set $B$  and  an infinite pairwise prime subset of $\mathbb{N}$ (say $H_0$), there exists pairwise prime set $H\subset H_0$
 such that $|H|=|B|$ and $\gcd(a,b)=1 \ \forall \ a\in H, b\in B$. Then applying $u=1$ in Corollary  \ref{Main_prop1_Maincor1},
we immediately get the following proposition.
\begin{prop}\label{main_cor1}
 Let $k,l\in\mathbb{N}$, $H_0$ be an infinite pairwise prime subset of $\mathbb{N}$ and $A$ be an $l$-syndetic set. If  $B$ is a pairwise prime 
 subset of $A$ such that   $|B|\geq l$,
 then $A\in \mathcal{S}_{k,l,H_0}$. 
\end{prop}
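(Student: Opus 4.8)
The plan is to deduce the statement directly from Corollary \ref{Main_prop1_Maincor1}: I would manufacture a suitable copy $H$ of the index set inside $H_0$, apply the corollary with $u=1$, and then use $l$-syndeticity to discard its ``bad'' alternative.

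First I would construct a pairwise prime set $H\subset H_0$ with $|H|=|B|$ that is coprime to every element of $B$. Since $B$ is finite, only finitely many primes divide the elements of $B$; and since $H_0$ is pairwise prime, each such prime divides at most one element of $H_0$. Hence only finitely many elements of $H_0$ fail to be coprime to all of $B$, so discarding them leaves an infinite set, from which I choose any $|B|$ elements to form $H$. Being a subset of the pairwise prime set $H_0$, this $H$ is automatically pairwise prime, and by construction $\gcd(a,b)=1$ for all $a\in H$ and $b\in B$.

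Next I would apply Corollary \ref{Main_prop1_Maincor1} with $h=|B|$ and $u=1$, noting that $A$ is infinite (every syndetic set meets each block of $l$ consecutive integers, hence is infinite) and that $1\cdot B=B\subset A$. This yields the dichotomy: either $A$ contains a configuration $\{x,xn^kr\}$ with $n\in H$ and $r\equiv 1\pmod n$, or there exists $z'\in\mathbb{N}$ such that $[z_t,z_t+h-1]\cap A=\varnothing$ for all $t\in\mathbb{N}$, where $z_t=z'+t\left(\prod_{x\in B}x^2\right)\left(\prod_{x\in H}x^{2k}\right)$.

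The essential point is to rule out the second alternative, and this is exactly where the hypothesis $|B|\geq l$ enters: the interval $[z_t,z_t+h-1]$ consists of $h=|B|\geq l$ consecutive integers, so $l$-syndeticity of $A$ forces $[z_t,z_t+h-1]\cap A\neq\varnothing$, contradicting the second case. Therefore the first alternative must hold, producing a configuration $\{x,xn^kr\}\subset A$ with $n\in H\subset H_0$ and $r\equiv 1\pmod n$, which by definition gives $A\in\mathcal{S}_{k,l,H_0}$. The argument is short once Corollary \ref{Main_prop1_Maincor1} is available; the only step needing a moment's care is the extraction of $H$ (resting on finiteness of $B$ and pairwise primality of $H_0$), and the one genuinely load-bearing observation is that the length of the obstruction interval equals $|B|$, so the bound $|B|\geq l$ is precisely what is needed to eliminate the bad case.
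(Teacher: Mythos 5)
Your proof is correct and follows essentially the same route as the paper: the paper likewise extracts a pairwise prime $H\subset H_0$ of size $|B|$ coprime to $B$ and applies Corollary \ref{Main_prop1_Maincor1} with $u=1$, with $l$-syndeticity ruling out the empty-interval alternative since the interval $[z_t,z_t+h-1]$ has length $|B|\geq l$. Your write-up merely makes explicit the two steps the paper leaves implicit (the extraction of $H$ and the elimination of the second case), which is a faithful expansion rather than a different argument.
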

 If $H_0 = \mathbb{P}$ or an infinite pairwise prime subset of the set of composite numbers, we get Theorem \ref{thm1}
 for those  $l$-syndetic sets which contain a pairwise prime set $B$ with $|B|\geq l$.

\section{Triveni triplets and syndetic sets}\label{section3}
 In the previous section, we observed that our proof of Theorem \ref{thm1} depends on the study of  pairwise prime subsets of
 syndetic sets. For 
  better understanding of these pairwise prime sets, we define Triveni triplet as follows. For $l\in \mathbb{N}$ and $p\in\mathbb{P}$, denote
  $$r(p,l):=\max\{t\in\mathbb{N}\cup\{0\}\colon p^t\leq 2l+1 \},$$
\begin{center}
 
$T(l):=\left\{\prod_{p\in \mathbb{P}\cap [2,2l+1]} p^{r_{p}}\colon 0\leq r_p\leq r(p,l) \right\}$.
\end{center}
\begin{defn}[Triveni triplet]\label{triveni}
 Let $l,h\in\mathbb{N}$, $A\subset \N$ and  $F\subset T(l)\setminus\{1\}$.
 Then $(F,h,l)$ is called a Triveni triplet with respect to the set $A$ if there exists a sequence of pairwise prime sets  $(B_{u})_{u\in F}$ such that
 \begin{enumerate}
  \item $|B_{u}|=h$ and $uB_{u}\subset A$ for each $u\in F$ 
  \item  For distinct $u,v\in F$, $\gcd(x,y)=1 ~\forall ~x\in B_u$ and ~$ y\in B_v$.
 \end{enumerate}
 
 Triveni triplets with respect to the set $A$  are called $\tvname$s. $|F|$ is called the order of the $\tvname$ $(F,h,l)$.
\end{defn}

\subsection{Triveni triplets of order one with respect to syndetic sets}
  One can produce Triveni triplets of order one with respect to  most of the syndetic sets using Zorn's lemma\cite{Halmos}.
\begin{lem}\label{Main_prop2lemma1}
 Let $r>0$ and $A \subset \mathbb{N}$. Let $\mathbb{M}_r$ be the collection of pairwise prime subsets $B$ of $\mathbb{N}$ satisfying 
 $rB$ = $\{ rx : x\in B \} \subset A$. Then $\exists$ $B_{r}\in$ $\mathbb{M}_r$ such that if $C\supset B_{r}$ and $C\in$ $\mathbb{M}_r$, 
 then $C$ = $B_{r}$. Here $B_{r}$ is  a maximal element of $\mathbb{M}_r$.
 \end{lem}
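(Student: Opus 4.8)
The plan is to obtain $B_r$ as a maximal element of the partially ordered set $(\mathbb{M}_r,\subset)$ via Zorn's lemma, so the task reduces to verifying the two hypotheses of that lemma: that $\mathbb{M}_r$ is non-empty, and that every chain in it admits an upper bound lying in $\mathbb{M}_r$. For non-emptiness I would simply observe that the empty set is vacuously pairwise prime and satisfies $r\varnothing=\varnothing\subset A$, so $\varnothing\in\mathbb{M}_r$ regardless of what $A$ is; this guarantees Zorn's lemma has something to act on.

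Next I would take an arbitrary chain $\mathcal{C}\subset\mathbb{M}_r$ (a subcollection totally ordered by inclusion) and propose its union $U:=\bigcup_{B\in\mathcal{C}}B$ as the candidate upper bound. By construction $B\subset U$ for every $B\in\mathcal{C}$, so it remains only to check that $U$ itself belongs to $\mathbb{M}_r$. The containment $rU\subset A$ is immediate: any $x\in U$ lies in some $B\in\mathcal{C}$, and hence $rx\in rB\subset A$.

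The one step deserving care --- and really the only substantive point --- is showing that $U$ is again pairwise prime, since in general a union of pairwise prime sets need not be pairwise prime. Here the chain hypothesis is exactly what rescues the argument: given distinct $a,b\in U$, choose $B_1,B_2\in\mathcal{C}$ with $a\in B_1$ and $b\in B_2$; because $\mathcal{C}$ is totally ordered, one of $B_1,B_2$ contains the other, so $a$ and $b$ lie in a common member of $\mathcal{C}$, which is pairwise prime, whence $\gcd(a,b)=1$. With $U\in\mathbb{M}_r$ established as an upper bound for every chain, Zorn's lemma furnishes a maximal element $B_r\in\mathbb{M}_r$, and maximality is precisely the stated assertion that $C\supset B_r$ together with $C\in\mathbb{M}_r$ forces $C=B_r$. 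I expect no genuine obstacle beyond correctly invoking the total ordering of the chain in the pairwise-primality check.
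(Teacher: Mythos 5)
Your proposal is correct and follows exactly the paper's route: the paper also applies Zorn's lemma to $(\mathbb{M}_r,\subset)$, asserting that the union of a chain stays in $\mathbb{M}_r$. You simply supply the details the paper leaves implicit (non-emptiness via $\varnothing\in\mathbb{M}_r$, and the total-ordering argument showing the union remains pairwise prime), which is a welcome elaboration but not a different proof.
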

\begin{proof}
 Let $\alpha$ be a chain in  the partially ordered set $(\mathbb{M}_r, \subset)$. Then the union of every elements of $\alpha$ belongs to the set $\mathbb{M}_r$. Hence, Zorn's lemma guarantees the existence of a maximal element. 
\end{proof}
In the above lemma,  $B_r$ may be an empty set. But certainly $B_r\neq\varnothing$ for some $r\in \mathbb{N}$. In particular, $B_1\neq\varnothing$. The next proposition  
deals with the existence of an infinite  pairwise prime set $B_r$ for some $r\in[1,l]$ with respect to those
$(2l+1)$-syndetic sets which do not contain at least two elements of 
$x\mathbb{N}$ for each $x\in \mathbb{N}$.
\begin{prop}\label{Main_prop2}
   Let $A$ be a  $(2l+1)$-syndetic set
   with $|x\mathbb{N}\setminus A|\geq 2 ~\forall ~ x\in \mathbb{N}$.
 Then $\exists$ $d\in[1,l]$ and an infinite pairwise prime set $B\subset \mathbb{N}$ such that  $dB \subset A$.
 \end{prop}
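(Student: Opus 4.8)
The plan is to produce the required pair $(d,B)$ from among the maximal pairwise prime sets furnished by Lemma \ref{Main_prop2lemma1}. For each $d\in[1,l]$ let $B_d$ be a maximal element of $\mathbb{M}_d$, so that $B_d$ is pairwise prime and $dB_d\subset A$. It suffices to show that at least one $B_d$ is infinite, for then I may take that $d$ together with $B=B_d$. Assume, for contradiction, that every $B_d$ with $d\in[1,l]$ is finite, and let $\mathcal{Q}$ denote the finite set of all primes dividing some element of $\bigcup_{d=1}^{l}B_d$; note $\mathcal{Q}\neq\varnothing$, since $A$ is infinite (otherwise $B_1=\{1\}$ could be enlarged by any element $>1$ of $A$). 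The maximality of $B_d$ translates into an obstruction: if $y\in\mathbb{N}$ is coprime to every element of $B_d$, lies outside $B_d$, and satisfies $dy\in A$, then $B_d\cup\{y\}$ would be a strictly larger member of $\mathbb{M}_d$, which is impossible.

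The key step is to upgrade this obstruction into a quantitative divisibility constraint on $A$. For a sufficiently large $a\in A$ (larger than every element of every $B_d$), let $d$ be the largest divisor of $a$ that is $\mathcal{Q}$-smooth (all prime factors in $\mathcal{Q}$) and satisfies $d\le l$; such a $d$ exists because $1$ qualifies. Writing $a=dy$ with $y$ large, the obstruction forces $y$ to share a prime $q\in\mathcal{Q}$ with some element of $B_d$, so $dq$ is a $\mathcal{Q}$-smooth divisor of $a$ strictly larger than $d$; by maximality of $d$ we must then have $dq>l$. Hence every large $a\in A$ has a $\mathcal{Q}$-smooth divisor exceeding $l$, equivalently its $\mathcal{Q}$-smooth part $\prod_{q\in\mathcal{Q}}q^{v_q(a)}$ exceeds $l$, where $v_q$ denotes the $q$-adic valuation.

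Finally I would isolate a full residue class inside $A$. Put $R=\prod_{q\in\mathcal{Q}}q$ and $x=R^{M}$ with $M$ chosen so large that $q^{M}>l$ for every $q\in\mathcal{Q}$. For any $j$ with $0<|j|\le l$ and any $q\in\mathcal{Q}$ one has $v_q(kx+j)=v_q(j)$ (because $v_q(kx)\ge M>v_q(j)$), so the $\mathcal{Q}$-smooth part of $kx+j$ equals that of $j$, which is at most $|j|\le l$; meanwhile the $\mathcal{Q}$-smooth part of $kx$ is at least $x>l$. Thus, for all large $k$, the unique integer in the window $[kx-l,kx+l]$ whose $\mathcal{Q}$-smooth part exceeds $l$ is $kx$ itself. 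Since $A$ is $(2l+1)$-syndetic this window of $2l+1$ consecutive integers meets $A$, and by the preceding paragraph every large member of $A$ has $\mathcal{Q}$-smooth part $>l$; hence $kx\in A$ for every $k\ge k_0$. Taking $x''=k_0x$ yields $x''\mathbb{N}\subset A$, so $|x''\mathbb{N}\setminus A|=0$, contradicting the hypothesis $|x\mathbb{N}\setminus A|\ge 2$. I expect the main obstacle to be the second paragraph: the naive maximality argument only shows that large elements of $A$ have \emph{some} prime factor in $\mathcal{Q}$, which is too weak to isolate a single multiple in a window of length $2l+1$ once $\mathcal{Q}$ contains small primes; the refinement to ``$\mathcal{Q}$-smooth part $>l$'', obtained through the largest $\mathcal{Q}$-smooth divisor $\le l$, is exactly what makes the window isolation, and hence the whole contradiction, go through.
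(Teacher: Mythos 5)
Your proof is correct, but it takes a genuinely different route from the paper's. Both arguments open the same way: fix maximal elements $B_d\in\mathbb{M}_d$ via Lemma \ref{Main_prop2lemma1} and suppose for contradiction that all are finite. The paper then makes a single local move: it multiplies every element of $\bigcup_{r=1}^{l}rB_r$ into one number $t$, invokes $|t\mathbb{N}\setminus A|\geq 2$ to find $st\notin A$ with $s>1$, uses syndeticity to find $z_j=st+j\in A$ with $0<|j|\leq l$, sets $d=\max\{r\in[1,l]\colon B_r\neq\varnothing,\ r\mid j\}$, and checks via a gcd computation (using $dx\mid t$, hence $\gcd(dx,z_j)=\gcd(dx,j)=d$) that $B_d\cup\{z_j/d\}$ violates maximality of $B_d$. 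You instead extract a \emph{global} structural consequence of maximality: letting $\mathcal{Q}$ be the primes occurring in $\bigcup_d B_d$ and taking $d$ to be the largest $\mathcal{Q}$-smooth divisor $\leq l$ of a large $a\in A$, the maximality obstruction produces $q\in\mathcal{Q}$ with $dq\mid a$ and $dq>l$, so every large element of $A$ has $\mathcal{Q}$-smooth part exceeding $l$; your valuation computation around $x=R^M$ then isolates $kx$ as the only point of the window $[kx-l,kx+l]$ with large smooth part, and syndeticity forces $kx\in A$ for all large $k$, i.e. $x''\mathbb{N}\subset A$. This is where the comparison is interesting: your contradiction is with $|x''\mathbb{N}\setminus A|=0$, so your argument actually proves the proposition under the weaker hypothesis $|x\mathbb{N}\setminus A|\geq 1$ for all $x$, whereas the paper needs the constant $2$ precisely to guarantee a missing multiple $st$ with $s>1$; the paper's proof, in exchange, is much shorter, needing one well-chosen element rather than your smooth-part machinery, though it must work with $rB_r\subset A\setminus[1,l]$ to ensure $z_j>dx$ --- a wrinkle your ``take $a$ and $k$ sufficiently large'' normalization avoids. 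Two trivially repairable nits in your write-up: to rule out $y=a/d\in B_d$ you need $a>l\cdot\max\bigl(\bigcup_d B_d\bigr)$, slightly more than ``larger than every element of every $B_d$'' (alternatively, note that $y\in B_d$ with $y>1$ also yields a prime of $\mathcal{Q}$ dividing $y$, which is all you use); and your parenthetical for $\mathcal{Q}\neq\varnothing$ should say that a maximal $B_1\subseteq\{1\}$ is impossible because any $a\in A\setminus\{1\}$ could be adjoined, which holds since $A$ is syndetic and hence infinite.
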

 \begin{proof}
Let $B_r$ be a maximal pairwise prime set satisfying  $rB_r\subset A\setminus [1,l]$ for each $r\in[1,l]$. Lemma \ref{Main_prop2lemma1}
 assures the existence of such $B_r$.
 
 By way of contradiction,  assume that $B_{r}$ is finite for each $r \in [1,l]$. 
 \noindent
 Define
  \begin{equation*}
  C := \displaystyle\bigcup_{r=1}^{l}rB_{r}= \{ u_{1}, u_{2}, ..., u_{m}\}\subset A \text{ and } t:=\prod_{i=1}^{m}u_i          .                                                 
 \end{equation*}
Clearly, $C\neq \varnothing$ and $t>l$ as $B_1\neq \varnothing$. 
Since $|t\mathbb{N}\setminus A|\geq 2$,
 $\exists$ $s\in \mathbb{N}\setminus\{1\}$ such that $st\notin A$. 
 Then the $(2l+1)$-syndeticity of $A$ ensures that  $z_{j}\in A$ for some $j\in [-l,l]\setminus 0$ where
$z_{i}:= st+i$  $~\forall ~i\in [-l,l]$. Note that $z_j>t$ as $j\geq -l$, $s>1$ and $t> l$.  Define 
$$d: = \max\{r\in [1,l]:  B_{r}\neq\varnothing \text{ and } r\mid j\}.$$
Clearly, $d$ is well define as $B_1\neq \varnothing$. Since $dx$ divides $t$ for all $x\in B_d$ and $z_j>t$, we have $z_j>dx$ $\forall ~ x\in B_d$.
 
 Let $x\in B_d$ and $d^{'}= \gcd(dx,z_{j})$. By the definition of $t$, $dx\mid t$. It follows that $d'=\gcd(dx,j)$. 
 Using $x\in B_d$ and  $d^{'}\mid dx$, we get that
the pairwise prime set $Z:=\{\frac{dx}{d^{'}}\}\subset \mathbb{N}$ satisfies $d'Z\subset A\setminus [1,l]$ and so $B_{d^{'}}\neq\varnothing$.
Since $d\mid j$ and $d'\mid j$, the definition of $d$ gives $d\mid d'$ and $d'\leq d$. Hence $d= \gcd(dx,z_{j})$.

 Therefore, we get  $z_j\in A$ such that $\gcd(dx,z_{j})=d$ and $z_j>dx$ for each $x\in B_d$. It gives us 
 pairwise prime set  $Y= B_{d}\cup \{\frac{z_{j}}{d}\}$ satisfying $dY\subset A\setminus[1,l]$ and $B_d\subsetneq Y$. This set $Y$ contradicts the maximality of $B_d$.
So, we get a contradiction to the  assumption that $B_{r}$ is finite for each $r\in [1,l]$. Hence, there exists $ r\in [1,l]$ such that $B_{r}$ is an 
infinite pairwise prime set.
 \end{proof}
Therefore, Triveni triplets of order one can be found as a corollary of the above proposition in the following way.%of the above proposition.

\begin{cor}\label{Existence_triveni_1}
  Let $k,l\in \mathbb{N}$, $H_0$ be an infinite 
 pairwise prime subset of $\mathbb{N}$ and 
$A$ be a  $(2l+1)$-syndetic set such that $A\notin \mathcal{S}_{k,2l+1,H_0}$.
Then there exists $d\in [2,l]$ such that
  $(\{d\},h,l)$ is an $\tvname$ for each $h\in \mathbb{N}$.
\end{cor}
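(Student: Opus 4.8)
The plan is to derive this directly from Proposition~\ref{Main_prop2}, whose hypothesis I must first supply from the non-membership assumption. Since $A\notin\mathcal{S}_{k,2l+1,H_0}$, I would begin by arguing that $|x\mathbb{N}\setminus A|\geq 2$ for every $x\in\mathbb{N}$, which is exactly the gap condition Proposition~\ref{Main_prop2} requires. Suppose instead that $A$ omits at most one multiple of some fixed $x$. Choose any $n\in H_0\setminus\{1\}$ (available because $H_0$ is infinite) and consider the pairs $\{xa,\,xan^k\}$, which have the form $\{w,wn^kr\}$ with $w=xa$ and $r=1\equiv 1\pmod n$. Both entries are multiples of $x$, and only the at most two values of $a$ for which $xa$ or $xan^k$ equals the single omitted multiple are forbidden; any other $a$ places both entries in $A$, giving $A\in\mathcal{S}_{k,2l+1,H_0}$, a contradiction. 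This establishes the gap hypothesis.

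Next I would apply Proposition~\ref{Main_prop2} with syndeticity parameter $2l+1$ to obtain some $d\in[1,l]$ and an infinite pairwise prime set $B$ with $dB\subset A$. Two things then remain: upgrading $d$ into $[2,l]$ and checking $d\in T(l)$. To exclude $d=1$, note that $d=1$ would make $B\subset A$ an infinite pairwise prime subset; selecting any $2l+1$ of its elements and invoking Proposition~\ref{main_cor1} (again with syndeticity $2l+1$) would yield $A\in\mathcal{S}_{k,2l+1,H_0}$, contradicting the hypothesis, so $d\in[2,l]$. For membership in $T(l)$, I factor $d=\prod_p p^{e_p}$; since $2\leq d\leq l<2l+1$, each prime power dividing $d$ satisfies $p^{e_p}\leq d\leq l<2l+1$, whence $p\in\mathbb{P}\cap[2,2l+1]$ and $e_p\leq r(p,l)$. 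Thus $d\in T(l)\setminus\{1\}$.

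Finally I would check the two conditions of Definition~\ref{triveni} for $F=\{d\}$. Given $h\in\mathbb{N}$, pick $B_d\subset B$ with $|B_d|=h$ (possible since $B$ is infinite); then $B_d$ is pairwise prime, $dB_d\subset dB\subset A$, and condition~(2) is vacuous because $|F|=1$. Hence $(\{d\},h,l)$ is an $A$-Triveni triplet for every $h$, as required.

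I expect the only genuinely substantive step to be the first: translating the combinatorial hypothesis $A\notin\mathcal{S}_{k,2l+1,H_0}$ into the density-style gap condition $|x\mathbb{N}\setminus A|\geq 2$ that drives Proposition~\ref{Main_prop2}. The subtlety there is merely to realize that a near-complete arithmetic progression $x\mathbb{N}$ inside $A$ already produces the forbidden configuration with ratio $n^k$ (taking $r=1$). The remaining verifications — ruling out $d=1$ and confirming $d\in T(l)$ — are routine once the definitions of $\mathcal{S}_{k,l,H}$ and $T(l)$ are unwound.
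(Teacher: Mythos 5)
Your proof is correct and takes essentially the same route as the paper's: both rest on Proposition~\ref{Main_prop2} combined with Proposition~\ref{main_cor1}, the latter disposing both of the case $A\supset x\mathbb{N}\setminus\{xy\}$ (your $r=1$, $n\in H_0\setminus\{1\}$ trick) and of the case $d=1$. You merely make explicit two verifications the paper leaves implicit --- deriving the hypothesis $|x\mathbb{N}\setminus A|\geq 2$ from $A\notin\mathcal{S}_{k,2l+1,H_0}$, and checking that any $d\in[2,l]$ lies in $T(l)\setminus\{1\}$ as Definition~\ref{triveni} requires.
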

\begin{proof}
 Suppose for each  $d\in [2,l]$, there exists $h_d\in\mathbb{N}$ such that
  $(\{d\},h_d,l)$ is  not an $\tvname$. Applying Proposition \ref{Main_prop2}, we get that either
  $A\supset x\mathbb{N}\setminus\{xy\}$ for some $x,y\in \mathbb{N}$ or there exists an infinite pairwise prime subset $B$
  satisfying $B\subset A$. Therefore, Proposition \ref{main_cor1} concludes the result.  
\end{proof}

\subsection{Triveni triplets of higher order with respect to  syndetic sets}

   Now we shall see a procedure for generating Triveni triplets of higher order with respect to syndetic sets. First we will
   prove some necessary results to demonstrate the procedure.
Due to  the next lemma, one can construct   sets with arbitrary large cardinalities in which the $\gcd$ of any two distinct elements belongs to the set $T(l)$.
Note that $\gcd$ of any two distinct elements in an interval with cardinality $(2l+1)$ belongs to the set $T(l)$. 
\begin{lem}\label{Higher_triveni_mainlemma1}
For $l\in \mathbb{N}$, there exist an increasing function $c\colon \mathbb{N}\longrightarrow \mathbb{N}$ and a strictly increasing  sequence of positive integers $(x_{i})_{i=1}^{\infty}$ such 
 that   $x\leq c(h)$ and $\gcd(x,y)\in T(l)$ $\forall$ $h\in \mathbb{N}$, $x,y\in S_{h}$ with $x\neq y$ where 
 $S_{h}=\cup_{i=1}^{h}[x_i, x_{i}+2l].$
\end{lem}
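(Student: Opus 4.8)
The plan is to build the sequence $(x_i)$ one block at a time by a recursive application of the Chinese remainder theorem, placing each new block $[x_{h+1},x_{h+1}+2l]$ far enough to the right, and in a suitable residue position, that it cannot create a forbidden gcd with anything placed earlier. The guiding observation is that $T(l)$ is exactly the set of divisors of $\mathrm{lcm}(1,2,\dots,2l+1)=\prod_{p\le 2l+1}p^{r(p,l)}$; hence $\gcd(x,y)\in T(l)$ fails precisely when either some prime $q>2l+1$ divides both $x$ and $y$, or some prime power $p^{r(p,l)+1}$ with $p\le 2l+1$ divides both. As the remark preceding the lemma records, any two distinct elements of the \emph{same} block $[x_i,x_i+2l]$ already have gcd dividing their difference, which is at most $2l$, so their gcd lies in $T(l)$ automatically. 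Thus only \emph{cross-block} pairs require attention.

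Suppose $S_h=\bigcup_{i=1}^{h}[x_i,x_i+2l]$ has been constructed with the desired property (the base case $h=1$, say $x_1=1$, is immediate). I would choose $x_{h+1}>x_h+2l$ subject to two finite families of congruence conditions. First, let $Q$ be the finite set of primes $q>2l+1$ that divide some element of $S_h$; for each such $q$ I require that no element of the new block be divisible by $q$. Since $q>2l+1$, forbidding $q\mid x_{h+1}+b$ for $b\in[0,2l]$ excludes only $2l+1<q$ residues modulo $q$, so an admissible residue survives; this ensures no prime exceeding $2l+1$ can divide both an old element and a new one. Second, for each prime $p\le 2l+1$ I require that no element of the new block be divisible by $p^{r(p,l)+1}$; because $p^{r(p,l)+1}>2l+1$, again only $2l+1$ residues modulo $p^{r(p,l)+1}$ are excluded, leaving room. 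This forces $v_p(y)\le r(p,l)$ for every new $y$, hence $v_p(\gcd(x,y))\le r(p,l)$ for every cross pair, the newer element of which always lies in the block just placed. Combining these finitely many congruences by the Chinese remainder theorem yields an arithmetic progression of admissible values for $x_{h+1}$, from which I select one exceeding $x_h+2l$; this keeps $(x_i)$ strictly increasing with disjoint blocks. Then every cross pair $(x,y)$ with $x\in S_h$ and $y$ in the new block has $\gcd(x,y)\in T(l)$, while all earlier pairs are untouched, so $S_{h+1}$ inherits the property.

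Finally I would set $c(h):=x_h+2l$, which is strictly increasing (as $x_h$ is) and bounds every element of $S_h$, supplying the required function; by induction the gcd condition holds for $S_h$ for all $h$, completing the construction.

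The step I expect to be the crux is the large-prime control. The primes $q>2l+1$ are exactly those that can throw a gcd outside $T(l)$ through a single shared factor, and since each new block introduces new such primes, the set $Q$ grows without bound. The argument survives only because at each \emph{individual} stage $Q$ is finite and each prime rules out strictly fewer residues than its own size; verifying that these finitely many avoidance conditions are simultaneously solvable, and remain compatible with driving $x_{h+1}$ arbitrarily far to the right, is where the Chinese remainder theorem together with the inequalities $q>2l+1$ and $p^{r(p,l)+1}>2l+1$ does the essential work.
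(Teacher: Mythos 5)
Your proposal is correct and follows essentially the same route as the paper: an inductive block construction in which the new block $[x_{h+1},x_{h+1}+2l]$ is positioned by congruence conditions so that no prime power $p^{r(p,l)+1}$ (in particular no prime $q>2l+1$, where $r(q,l)=0$) can divide both a new element and an old one, with within-block pairs handled by the gcd-divides-difference observation. The only cosmetic difference is that the paper writes down one explicit admissible choice, $x_{h+1}=1+\prod_{u\in Y}u$ with $Y$ the set of maximal prime powers dividing $\prod_{x\in S_h}x$ (so that $x_{h+1}+j\equiv j+1\not\equiv 0$ modulo each relevant prime power), whereas you locate an admissible residue by counting the at most $2l+1$ forbidden residues per modulus and invoking the Chinese remainder theorem abstractly --- precisely the CRT reading that the paper itself records in the remark following the lemma.
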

\begin{proof}
Define $x_1:=1$ and $c(1):=2l+1$. Then $\gcd(a,b)\in T(l)$ for $a,b\in S_1$.

For given $h\in \mathbb{N}$, suppose there exist a sequence $(x_i)^{h}_{i=1}$ in $\mathbb{N}$ and $c(h)\in \mathbb{N}$  such that 
$a\leq c(h)$ and $\gcd(a,b)\in T(l)$ $\forall$ $a,b\in S_h$ with $a\neq b$. To complete the proof by induction,
we need to generate $x_{h+1}$ and $c(h+1)$ such that $\gcd(x,x_{h+1}+j)\in T(l)$, $x_h<x_{h+1}\leq c(h+1)-2l $ and $c(h)\leq c(h+1)$
$\forall$
$x\in S_h $ and $ j\in [0, 2l]$. For this purpose, we define
 $$m:=\prod_{x\in S_h}x \text{ and }
 Y:=\{p^r\colon p\in \mathbb{P},~ p^r\mid m \text{ and } p^{r+1}\nmid m\}$$
 $$x_{h+1}:= 1+\prod_{u\in Y}u \text{ and } c(h+1):=\max\left\{2l+x_{h+1}, ~c(h)\right\}. $$

The definition of  $c(h+1)$ ensures $x_{h+1}\leq c(h+1)-2l $ and $c(h)\leq c(h+1)$. Since $x_h$ divides $m$, the definition of $x_{h+1}$ gives us $x_h < x_{h+1}$.

Let  $x\in S_h $ and $j\in [0,2l]$. To show that  $\gcd(x,x_{h+1}+j)\in T(l)$, let $q$ be a prime divisor of $x$
such that
$q^{r(q,l)+1}\mid x$. Since $x\in S_h$,
it follows that $q^{r(q,l)+1}\mid m$ which is followed by $q^{r(q,l)+1}\mid u$ for some $u\in Y$. 
Then $x_{h+1}+j \equiv j+1\pmod{q^{r(q,l)+1}}$ by  the definition of $x_{h+1}$. Hence, $q^{r(q,l)+1} \nmid x_{h+1}+j$ because of the fact that  $j+1\in [1,2l+1]$ but $q^{r(q,l)+1} > 2l+1$ by the definition
 of $r(q,l)$. Hence $q^{r(q,l)+1}\nmid \gcd(x,x_{h+1}+j)$. Since $r(p,l)=0$ $\forall$ $p\in \mathbb{P}\cup [2l+2,\infty)$, we have
 $\gcd(x,x_{h+1}+j)\in T(l)$ by definition of $T(l)$.
\end{proof}
In the above lemma, the definition of the element $x_{h+1}$ is inspired by the application of the Chinese remainder theorem on the congruences $x\equiv 1\pmod{u} ~\forall~ u\in Y$. One can choose any positive integer which satisfies
these congruences and is  greater than $1$. 
 
\begin{cor} \label{interval_gcd_in_Tl}
There exists  a map $m\colon \mathbb{N}\times \mathbb{N}\rightarrow \mathbb{N}$ such that
for $h,l,n\in\mathbb{N}$, an
 interval $[n, n+m(h,l)]$
contains a set $S$ with the following properties.
\begin{enumerate}[label=(\roman*)]
 \item \label{interval_gcd_in_Tl_condition1} $S=\displaystyle\bigcup_{i=1}^{h}[x_i, x_i+2l]$   for some strictly increasing sequence $(x_i)_{i=1}^{h}$ in $\mathbb{N}$.
 \item \label{interval_gcd_in_Tl_condition2} $\gcd(a,b)\in T(l) ~\forall ~ a,b\in S$ with $a\neq b$.
\end{enumerate}

\end{cor}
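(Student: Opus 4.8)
The plan is to use Lemma \ref{Higher_triveni_mainlemma1} to manufacture a single ``template'' configuration sitting near the origin, and then to slide it by a carefully chosen translation into the prescribed window $[n,n+m(h,l)]$ in a way that preserves the relation $\gcd\in T(l)$. The guiding principle is that all the arithmetic constraints needed to keep the gcd condition after a translation depend only on the pairwise \emph{differences} of the template, and these are fixed once $h$ and $l$ are fixed, independently of $n$.

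Concretely, I would first fix $h,l$ and invoke Lemma \ref{Higher_triveni_mainlemma1} to produce $c(h)$ and a template $S^{0}=\bigcup_{i=1}^{h}[x_i^{0},x_i^{0}+2l]\subset[1,c(h)]$ with $\gcd(a,b)\in T(l)$ for all distinct $a,b\in S^{0}$. Since the differences $a-b$ are bounded by $c(h)$, the set $\mathcal{D}:=\{p\in\mathbb{P}:p>2l+1,\ p\mid(a-b)\text{ for some }a\neq b\in S^{0}\}$ is a finite set of primes depending only on $h$ and $l$. I then seek a translate $S^{0}+N=\bigcup_{i=1}^{h}[x_i^{0}+N,x_i^{0}+N+2l]$ and impose congruences on $N$. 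Writing $P_{0}:=\prod_{p\le 2l+1}p^{r(p,l)+1}$, I require $N\equiv 0\pmod{P_{0}}$; then $a+N\equiv a\pmod{p^{r(p,l)+1}}$ for each $p\le 2l+1$, so the inequality $v_p(a)\le r(p,l)$ is inherited by $a+N$, and since the template gives $\min(v_p(a),v_p(b))\le r(p,l)$ we get $v_p(\gcd(a+N,b+N))\le r(p,l)$. For a prime $p>2l+1$ we have $r(p,l)=0$, so the only requirement is that no two translated elements be simultaneously divisible by $p$; this can fail only if $p\mid(a-b)$, i.e.\ only for $p\in\mathcal{D}$, and it is controlled by forbidding $-N\bmod p$ from being a residue met by two distinct blocks of $S^{0}$.

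Granting for the moment that admissible residues exist modulo every $p\in\mathcal{D}$, the Chinese Remainder Theorem shows that the admissible shifts $N$ form at least one residue class modulo $W:=P_{0}\prod_{p\in\mathcal{D}}p$, a modulus depending only on $h$ and $l$. I would then set $m(h,l):=c(h)-1+W$, so that the window of admissible shifts $[\,n-1,\ n+m(h,l)-c(h)\,]$ has length at least $W$ and therefore contains an admissible $N$; the translate $S:=S^{0}+N$ then lies in $[n,n+m(h,l)]$ and satisfies \ref{interval_gcd_in_Tl_condition1} and \ref{interval_gcd_in_Tl_condition2}.

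The step I expect to be the main obstacle is exactly the proviso above: for the ``medium'' primes $p$ with $2l+1<p\le h(2l+1)/2$ a crude count does not exclude the possibility that \emph{every} residue modulo $p$ is hit by at least two block-arcs, which would leave no admissible residue for $N$. I would remove this difficulty by strengthening the construction in Lemma \ref{Higher_triveni_mainlemma1}, using the freedom recorded in the remark following it, so that in addition every block-start satisfies $x_i^{0}\equiv 1\pmod{P^{*}}$, where $P^{*}$ is the product of these finitely many medium primes. Then modulo each medium $p$ all $h$ blocks occupy the same arc $\{1,\dots,2l+1\}$, leaving the residues $0,2l+2,\dots,p-1$ uncovered; choosing $-N$ among them forces no translated element to be divisible by $p$, as required. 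For the remaining large primes $p\in\mathcal{D}$, namely $p>h(2l+1)/2$, the total coverage $h(2l+1)$ of the $h$ arcs bounds the number of doubly-covered residues by $h(2l+1)/2<p$, so an admissible residue is automatically available, and the Chinese Remainder Theorem assembles all the conditions into the single modulus $W$.
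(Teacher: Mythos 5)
Your proposal is correct and shares the paper's skeleton --- take the template from Lemma \ref{Higher_triveni_mainlemma1} and slide it into $[n,n+m(h,l)]$ by a shift constrained by congruences --- but you handle the primes $p>2l+1$ in a genuinely different and more laborious way. The paper simply demands that the shift $a_0$ be divisible by $L:=\prod_{q\in\mathbb{P}\cap[2,c(h)]}q^{r(q,l)+1}$, i.e.\ it puts \emph{every} prime up to $c(h)$ into the modulus, not merely those up to $2l+1$ as in your $P_0$; setting $m(h,l):=L+c(h)$, a multiple of $L$ always lies in $[n+1,n+L]$, and the entire gcd verification becomes one uniform step: if $p^{r(p,l)+1}\mid\gcd(a_0+u_1,a_0+u_2)$ with $u_1\neq u_2$ in the template, then $p\mid u_1-u_2$ forces $p\le c(h)$, whence $p^{r(p,l)+1}\mid L\mid a_0$ and so $p^{r(p,l)+1}\mid\gcd(u_1,u_2)$, contradicting $\gcd(u_1,u_2)\in T(l)$. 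In particular the ``main obstacle'' you identify --- medium primes all of whose residues might be doubly covered --- evaporates in the paper's treatment: no residue is selected at all, so there is no need to strengthen Lemma \ref{Higher_triveni_mainlemma1} and no counting argument for large primes. Your route does work: the strengthening is available exactly as you say (the remark after the lemma permits choosing $x_{h+1}$ to satisfy the additional congruences $x\equiv 1\pmod{P^{*}}$, which are consistent with $x\equiv 1\pmod{u}$ for $u\in Y$ since all residues equal $1$, and the base point $x_1=1$ complies), your small-prime valuation argument is sound, and the count showing at most $h(2l+1)/2<p$ doubly covered residues for $p>h(2l+1)/2$ is right. One wrinkle you should repair: define $P^{*}$ as the product of \emph{all} primes in $(2l+1,\,h(2l+1)/2]$, not just the medium primes of $\mathcal{D}$ --- since $\mathcal{D}$ is computed from the template, tying $P^{*}$ to $\mathcal{D}$ makes the construction circular (re-running the construction changes the template and hence $\mathcal{D}$), whereas the template-independent choice costs nothing. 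What your extra bookkeeping buys is only a smaller admissible modulus $W$, hence a smaller value of $m(h,l)$, which is irrelevant for the corollary; the paper trades the size of $m$ for a much shorter proof.
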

\begin{proof}
For given  $l,h\in\mathbb{N}$, Lemma \ref{Higher_triveni_mainlemma1} gives a strictly increasing sequence
$(y_i)_{i=1}^{h}$ in $\mathbb{N}$ and a positive integer $c(h)$ such that 
 $a\leq c(h)$ and $ \gcd(a,b)\in T(l) ~\forall ~a,b\in R \text{ with $a\neq b$ }$
 where 
 $R=\displaystyle\cup_{i=1}^{h} [y_i, y_i+2l]$.
 Hence, define a map $m\colon \mathbb{N}\times \mathbb{N}\rightarrow \mathbb{N}$ such that
 $m(h,l):=L+c(h)$
  where $L=\prod_{q\in \mathbb{P}\cap [2,c(h)]}q^{r(q,l)+1}$.

 Let $n\in \mathbb{N}$ and $a_0\in [n+1, n+L]$ be an integer divisible by $L$. Then  we shall show that the set
 $S:=\{a_0+u\colon u\in R\} $ satisfies $S\subset[n,n+m(h,l)]$ along with  the properties \ref{interval_gcd_in_Tl_condition1} and \ref{interval_gcd_in_Tl_condition2}
 given in the statement of Corollary \ref{interval_gcd_in_Tl_condition1}. By construction, one can note that property \ref{interval_gcd_in_Tl_condition1} is obvious. Using the fact that 
  $a\leq c(h)~\forall ~a\in R$ and $a_0\in [n+1, n+L]$, we get that $S\subset[n,n+m(h,l)]$. 
 
  To prove property \ref{interval_gcd_in_Tl_condition2}, Let $v_1,v_2\in S$ with $v_1\neq v_2$. Then for each $ i\in \{1,2\}$,
 $v_i=a_0+u_i$ for some $u_i\in R$. By way of contradiction, assume that $p$ be a prime  such that $p^{r(p,l)+1}\mid \gcd(v_1,v_2)$.
 Since $r(p,l)\in \mathbb{N}\cup\{0\}$, It follows that  $p\mid u_1-u_2$. This implies $p\leq c(h)$  because $u_1,u_2\in [1,c(h)]$.
 Then, $p^{r(p,l)+1}\mid a_0$ by applying the definitions of $a_0$ and $L$.
It follows that $ p^{r(p,l)+1}\mid\gcd(u_1,u_2)$ by using assumption that
 $ p^{r(p,l)+1}\mid \gcd(v_1,v_2)$. But, this contradicts the fact that  $\gcd(u_1,u_2)\in T(l)$. So we get a contradiction to the assumption that
 $p^{r(p,l)+1}\mid \gcd(v_1,v_2)$.
 
 Therefore,  $p^{r(p,l)+1}\nmid \gcd(v_1,v_2)$ for each $p\in\mathbb{P}$ and hence
 $\gcd(v_1,v_2)\in T(l)$ because $r(p,l)=0$ $\forall$ $p\in [2l+2,\infty)$.
\end{proof}
 The next lemma solves a Diophantine problem using Chinese remainder theorem.
Define $C(l):=(2l+1)^{2l+1}$ and note that  $u< C(l)~\forall~ u\in T(l)$.
\begin{lem}\label{Higher_triveni_mainlemma2}
 Let $(a_i)_{i=1}^{n}$ and $(u_i)_{i=1}^{n} $ be  sequences in $\N\cup\{0\}$ and $T(l)$ respectively.
 If $X=\{x_1,x_2,\cdots x_n\}$
 be a pairwise prime subset of $\mathbb{N}$,
 then there exist $z\in \mathbb{N}$, a sequence $(r_i)^{n}_{i=1}$ in $[0,C(l)]$ and a sequence $(t_i)^{n}_{i=1}$ in $\mathbb{N}$ such that
 \begin{equation} z+r_i=a_i+ t_ix_iu_i ~\forall~ i\in [1,n]. \label{Outcome_Higher_triveni_lemma2}\end{equation}
\end{lem}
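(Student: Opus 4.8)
The plan is to recast \eqref{Outcome_Higher_triveni_lemma2} as a single simultaneous congruence. Putting $t_i=(z+r_i-a_i)/(x_iu_i)$, the requirement is precisely that $z\equiv a_i-r_i\pmod{x_iu_i}$ for every $i$, together with $z+r_i-a_i$ being a \emph{positive} multiple of $x_iu_i$ (so that $t_i\in\mathbb{N}$). Thus I would first choose the small offsets $r_i\in[0,C(l)]$, then solve for $z$ by the Chinese remainder theorem, and finally translate $z$ upward to make each $t_i$ positive. The essential difficulty is that the moduli $x_iu_i$ need not be pairwise coprime: although $X$ is pairwise prime, the factors $u_i\in T(l)$ are built from the small primes $\mathbb{P}\cap[2,2l+1]$ and may share these primes with one another and with the $x_j$. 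Hence the ordinary Chinese remainder theorem does not apply directly, and the role of the $r_i$ is exactly to repair the resulting incompatibilities.

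The hard part, and the heart of the argument, is to control the pairwise greatest common divisors $\gcd(x_iu_i,x_ju_j)$ for $i\neq j$. I claim each such gcd divides $D:=\prod_{p\in\mathbb{P}\cap[2,2l+1]}p^{r(p,l)}$, the largest element of $T(l)$. To see this, fix a prime $q$. If $q>2l+1$, then $q$ divides no $u_i$, and by pairwise primality of $X$ it divides at most one of $x_i,x_j$; so $q\nmid\gcd(x_iu_i,x_ju_j)$. If $q=p\leq 2l+1$, then again $p$ divides at most one of $x_i,x_j$, say $p\nmid x_j$; the exponent of $p$ in $x_ju_j$ then equals the exponent of $p$ in $u_j$, which is at most $r(p,l)$ because $u_j\in T(l)$. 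Consequently the exponent of $p$ in the gcd is at most $r(p,l)$, and multiplying over all $p\leq 2l+1$ gives $\gcd(x_iu_i,x_ju_j)\mid D$. In particular $\gcd(x_iu_i,x_ju_j)\leq D<C(l)$, the last inequality being the noted bound $u<C(l)$ for $u\in T(l)$ applied to $D\in T(l)$.

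With this in hand, I would make the system compatible by a single congruence. Choose, for each $i$, the offset $r_i\in[0,D-1]\subseteq[0,C(l)]$ with $r_i\equiv a_i\pmod{D}$, which is possible since $D<C(l)$. Then $a_i-r_i\equiv 0\pmod{D}$ for every $i$, so $a_i-r_i\equiv a_j-r_j\pmod{D}$, and since $\gcd(x_iu_i,x_ju_j)\mid D$ we obtain the compatibility condition $a_i-r_i\equiv a_j-r_j\pmod{\gcd(x_iu_i,x_ju_j)}$ for all $i,j$. By the general form of the Chinese remainder theorem for non-coprime moduli (see \cite{Nathanson}), pairwise compatibility is sufficient, so the system $z\equiv a_i-r_i\pmod{x_iu_i}$, $1\leq i\leq n$, has a solution, unique modulo $\mathrm{lcm}_{i}(x_iu_i)$.

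Finally I would secure the positivity of the $t_i$. Replacing the CRT solution by $z+N\cdot\mathrm{lcm}_i(x_iu_i)$ for large $N$, I may assume $z\in\mathbb{N}$ and $z>\max_i a_i$. Then for each $i$ the integer $z+r_i-a_i$ is divisible by $x_iu_i$ and is strictly positive, so $t_i:=(z+r_i-a_i)/(x_iu_i)$ lies in $\mathbb{N}$ and \eqref{Outcome_Higher_triveni_lemma2} holds with $r_i\in[0,C(l)]$. I expect the gcd computation of the second paragraph to be the only genuine obstacle; everything else is assembling the pieces and a standard appeal to the Chinese remainder theorem.
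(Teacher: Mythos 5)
Your proof is correct, but it routes around the difficulty differently from the paper. You treat the non-coprime moduli $x_iu_i$ head-on: you prove the gcd bound $\gcd(x_iu_i,x_ju_j)\mid D$ with $D=\prod_{p\in\mathbb{P}\cap[2,2l+1]}p^{r(p,l)}$ (your prime-by-prime verification is sound, using that primes $q>2l+1$ cannot divide any $u_i$ and that pairwise primality of $X$ localizes each small prime's large exponent to at most one $x_i$), choose $r_i$ as the residue of $a_i$ modulo $D$, and then invoke the generalized Chinese remainder theorem for non-coprime moduli, whose pairwise compatibility conditions hold because all $a_i-r_i$ are divisible by $D$ while each pairwise gcd divides $D$. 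The paper never needs generalized CRT: it sets $u=\mathrm{lcm}(u_1,\dots,u_n)\in T(l)$, writes $a_i=b_iu+r_i$ with $0\leq r_i<u<C(l)$, applies the classical CRT only to the pairwise coprime moduli $x_1,\dots,x_n$ to get $b=b_i+v_ix_i$, and then sets $z=bu$ and $t_i=v_iu/u_i$, absorbing the troublesome factor $u_i$ through the divisibility $u_i\mid u$ rather than through compatibility conditions. Both proofs share the key idea of taking the $r_i$ to be residues of the $a_i$ modulo a single common multiple of all the $u_i$ (your $D$ is a multiple of the paper's $u$); the paper's factorization is shorter and entirely self-contained within the coprime CRT, and even produces a solution of the stronger congruence $z+r_i\equiv a_i\pmod{x_iu}$, whereas your argument carries the overhead of the generalized CRT plus the gcd lemma but makes completely explicit where coprimality fails and why $T(l)$, via its maximal element $D$, is exactly the right reservoir for the offsets $r_i$. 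Your final translation step ($z\mapsto z+N\cdot\mathrm{lcm}_i(x_iu_i)$ to force $z>\max_i a_i$ and hence $t_i\in\mathbb{N}$) correctly handles positivity, which the paper handles implicitly in its choice of $b$.
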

\begin{proof}
 Let $u=\text{lcm}(u_1,u_2,\cdots , u_n)$. Then  $u\in T(l)$ and so $u<C(l)$. 
 For each $i\in [1,n]$, choose non-negative integers $r_i$ and $b_i $ such that $a_i=b_iu+r_i$ and $0\leq r_i<u$. Note that
 $r_i\in [0,C(l)]$ as $u< C(l)$.
 Since $X$ is a pairwise prime set, the  Chinese remainder theorem gives 
   $b\in \mathbb{N}$ and a sequence  $(v_i)_{i=1}^{n}$ in $\mathbb{N}$ such that $b= b_i+ v_ix_i$ $\forall$  $i\in [1,n]$.
  Hence, $t_i:=\frac{v_iu}{u_i}$  and $z:=bu$  satisfy the required
  equation in  \eqref{Outcome_Higher_triveni_lemma2}.
\end{proof}
Using the above corollaries and lemmas, the next two propositions demonstrate the  complete procedure to generate Triveni triplets of higher
order with respect to syndetic sets. Define the  map $\Lambda\colon \mathbb{N}\times \mathbb{N}\rightarrow \mathbb{N}$ such that
$$\Lambda(h,l)=m(h,l)+2C(l) ~\forall~ h,l\in \mathbb{N}$$
where the map  $m\colon \mathbb{N}\times \mathbb{N}\rightarrow \mathbb{N}$ is taken from  Corollary \ref{interval_gcd_in_Tl}.
For $l\in\mathbb{N}$ and  $F\subset T(l)$, define
$$D(l):=|T(l)| \text{ and }
Mul(F):=\{v\in T(l)\colon \exists~ u\in F \text{ such that } u|v\}.$$ 
\begin{prop}\label{Higher_triveni_Main_prop}
 Let $k,l,h\in \mathbb{N}$, $A$ be a $(2l+1)$-syndetic set and $H_0$ be an infinite 
 pairwise prime subset of $\mathbb{N}$. 
 If  $(F,\Lambda(D(l)h,l),l)$ is  an $\tvname$ and 
  $A\notin \mathcal{S}_{k,2l+1,H_0}$, then $\exists ~ w\in T(l)\setminus Mul(F)$ and a pairwise prime set $C_w$ such that  $|C_w|=h$ and  $wC_w\subset A$. 
\end{prop}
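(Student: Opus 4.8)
The plan is to produce $D(l)h$ elements of $A$ whose pairwise greatest common divisors all lie in $T(l)$ and none of which is divisible by any element of $F$, and then to finish by a pigeonhole argument over the $D(l)=|T(l)|$ possible $T(l)$-types, combined with the coprimality bookkeeping already used in Section \ref{section2}.

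First I would apply Corollary \ref{interval_gcd_in_Tl} with the parameter $D(l)h$ to obtain, inside a prescribed interval, a set $S=\bigcup_{i=1}^{D(l)h}[x_i,x_i+2l]$ consisting of $D(l)h$ blocks of $2l+1$ consecutive integers with $\gcd(a,b)\in T(l)$ for all distinct $a,b\in S$. Since $A$ is $(2l+1)$-syndetic, $A$ meets each block, so I may select $a_i\in A\cap[x_i,x_i+2l]$; by construction $\gcd(a_i,a_j)\in T(l)$ whenever $i\neq j$. In parallel, for each $u\in F$ I would feed a $(2l+1)$-element subset of $B_u$ (available because $|B_u|=\Lambda(D(l)h,l)$ is large) together with a suitable coprime $H\subset H_0$ into Corollary \ref{Main_prop1_Maincor1}; since $A\notin\mathcal{S}_{k,2l+1,H_0}$ rules out its first alternative, I obtain for each $u$ an arithmetic progression of forbidden windows $u[z^{u}_{t},z^{u}_{t}+2l]\cap A=\varnothing$, periodic with a period $P_u$ built from squares of the chosen elements of $B_u$ and $H$.

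The heart of the argument is to position the block-interval $S$ so that, for every $u\in F$ and every block $i$, each multiple of $u$ lying in $[x_i,x_i+2l]$ is carried by division by $u$ into one of the forbidden windows of $u$. This is exactly the kind of alignment handled by Lemma \ref{Higher_triveni_mainlemma2}: the pairwise prime moduli are drawn from the sets $B_u$, whose mutual coprimality across distinct $u\in F$ is guaranteed by condition (2) of Definition \ref{triveni}, while the small-prime ($T(l)$) data are absorbed into the factors $u_i\in T(l)$, and the bounded residues $r_i\in[0,C(l)]$ supply the slack reflected in the summand $2C(l)$ of $\Lambda$. The delicate point, and the step I expect to be the main obstacle, is to make all these congruences simultaneously solvable: one must match the block length $2l+1$ to the forbidden-window length, ensure that $u\mid a_i$ forces $a_i/u$ into the correct window modulo $P_u$, and verify that the large cardinality $\Lambda(D(l)h,l)=m(D(l)h,l)+2C(l)$ really does supply enough pairwise coprime moduli to cover the whole length-$m(D(l)h,l)$ block-interval at once.

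Granting the alignment, the conclusion is quick. If some $a_i$ were divisible by a $u\in F$, then $a_i/u$ would lie in a forbidden window and hence $a_i\in u[z^{u}_{t},z^{u}_{t}+2l]$, contradicting $a_i\in A$; thus no $a_i$ is divisible by any element of $F$, so the $T(l)$-type $w_i$ of $a_i$ (its largest divisor in $T(l)$) satisfies $w_i\in T(l)\setminus Mul(F)$. Since there are $D(l)h$ indices and at most $D(l)$ types, some $w\in T(l)\setminus Mul(F)$ occurs for at least $h$ indices; writing $a_i=wc_i$ for these, the property $\gcd(a_i,a_j)\in T(l)$ forces $\gcd(c_i,c_j)=1$ (a shared prime would be at most $2l+1$ with exponent exceeding $r(p,l)$, impossible for an element of $T(l)$). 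Hence $C_w=\{c_i\}$ is a pairwise prime set with $|C_w|=h$, $wC_w\subset A$, and $w\in T(l)\setminus Mul(F)$, as required.
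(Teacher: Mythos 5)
You have the right endgame --- the pigeonhole on the largest divisor in $T(l)\setminus Mul(F)$ and the gcd argument making $C_w$ pairwise prime are essentially how the paper concludes --- but the alignment step you yourself flag as ``the main obstacle'' is a genuine gap, and it fails in the form you propose. Feeding only a $(2l+1)$-element subset of $B_u$ into Corollary \ref{Main_prop1_Maincor1} produces forbidden windows of length $2l+1$ recurring with period $P_u$, and you then need, for each fixed $u\in F$, the divided-by-$u$ images of all $D(l)h$ blocks to land in such windows. But the blocks span an interval of length $m(D(l)h,l)$, whose image under division by $u$ has length roughly $m(D(l)h,l)/u$, which for large $h$ far exceeds $2l+1$, so those images cannot fit inside a single window; and distinct windows for the same $u$ sit $P_u$ apart, while the relative offsets $x_j-x_i$ of the blocks are already fixed by the construction in Lemma \ref{Higher_triveni_mainlemma1}, independently of $P_u$. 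Crucially, Lemma \ref{Higher_triveni_mainlemma2} supplies one congruence per pairwise coprime modulus --- that is, one per $u\in F$ (the cross-coprimality coming from condition (2) of Definition \ref{triveni}) --- not one per pair $(u,i)$, so the single translation parameter cannot satisfy your $(u,i)$-indexed system: it is overdetermined.

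The paper's proof avoids this by inverting your order of operations and using the full cardinality $|B_u|=\Lambda(D(l)h,l)$, which is exactly what that parameter is for. Applying Corollary \ref{Main_prop1_Maincor1} to all of $B_u$ yields, for each $u\in F$, a \emph{single} window $u[z_{u,t},z_{u,t}+\Lambda(D(l)h,l)-1]$ disjoint from $A$, i.e.\ one $A$-free stretch of multiples of $u$ of ambient length about $u\,\Lambda(D(l)h,l)$. Lemma \ref{Higher_triveni_mainlemma2} (with moduli $\prod_{x\in B_u}x^2\prod_{x\in H_u}x^{2k}$, the factor $u\in T(l)$ in the $u_i$ slot, and the residues $r_u\in[0,C(l)]$ absorbed by the $2C(l)$ slack built into $\Lambda$) then aligns these $|F|$ long windows \emph{once}, producing a common interval $I$ of length $m(D(l)h,l)$ with $u\mathbb{N}\cap A\cap I=\varnothing$ for every $u\in F$ simultaneously. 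Only then is Corollary \ref{interval_gcd_in_Tl} invoked --- its free parameter $n$ exists precisely so that the gcd-controlled blocks can be planted inside $I$ --- after which your selection of $s_i\in A\cap S_i$, the non-divisibility by elements of $F$, and your pigeonhole and gcd finish go through. So the repair is not a refinement of your alignment scheme but a structural inversion: one long window per $u$ first, blocks second.
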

\begin{proof}
 Since  $(F,\Lambda(D(l)h,l),l)$ is an $\tvname$, there exists a sequence of pairwise prime sets $(B_u)_{u\in F}$ satisfying $|B_u|=\Lambda(D(l)h,l)$ and $uB_u\subset A$ for each $u\in F$ 
 such that 
for distinct $u,v\in F$, 
$$\gcd(x,y)=1 ~\forall ~x\in B_u \text{ and} ~ y\in B_v.$$

Let $B=\displaystyle \cup_{u\in F}B_u$. Since $H_0$ is an infinite pairwise prime set, there exists a sequence of 
pairwise prime subsets of $H_0$ (say $(H_u)_{u\in F}$) satisfying $|B_u|=|H_u|$  $\forall ~u\in F$
such  that
for distinct $u,v\in F$, 
$$\gcd(x,y)=\gcd(x,b)=1 ~\forall ~x\in H_u, ~ y\in H_v \text{ and  } b\in B.$$

Given that $A$ is a $(2l+1)$-syndetic set satisfying $A\notin \mathcal{S}_{k,2l+1,H_0}$.
Applying Corollary
\ref{Main_prop1_Maincor1}, $\exists$ a sequence  $(z_u)_{u\in F}$ in $\mathbb{N}$ such that if 
$z_{u,t}=z_u+t\left(\prod_{x\in B_u}x^2\right)\left(\prod_{x\in H_u}x^{2k}\right)$ with  $t\in \mathbb{N}$, then
 \begin{equation}
 u[z_{u,t},z_{u,t}+ \Lambda(D(l)h,l)-1]\cap A=\varnothing ~\forall~ t\in \mathbb{N}, u\in F.\label{Higher_triveni_Main_propeq1}
 \end{equation}
By Lemma \ref{Higher_triveni_mainlemma2}, there exist  sequences $(r_u)_{u\in F}$ in $[0, C(l)]$, $(t_u)_{u\in F}$ in $\mathbb{N}$ and $z\in \mathbb{N}$ such that
$z+r_u=uz_{u,t_u} ~\forall ~u\in F.$ Then equation \eqref{Higher_triveni_Main_propeq1} guarantees that
$ u\mathbb{N}\cap A\cap  [z+r_u, z+r_u+u\Lambda(D(l)h,l)-u]=\varnothing$  $~\forall ~u\in F.$ 
Since $r_u\in[0,C(l)]$ and $1<u<C(l)$ $\forall$ $u\in F$, it follows that $u\mathbb{N}\cap A\cap I=\varnothing$ $~\forall ~u\in F$ where 
$$I=[z+C(l), z+\Lambda(D(l)h,l)-C(l)]=[z+C(l),z+C(l)+m(D(l)h,l)].$$
Hence,  Corollary
\ref{interval_gcd_in_Tl} gives us a sequence of  intervals in  $I$ (say $(S_i)_{i=1}^{D(l)h}$)
such that 
\begin{enumerate}[label=(\alph*)]
 \item 
  $|S_i|=2l+1$ and $ A\cap S_i \cap u\mathbb{N}=\varnothing$   $\forall ~i\in[1, D(l)h]$, $u\in F$,
\item 
$v_1\in S_i$, $v_2\in S_j$ and  $v_1\neq v_2$ for $i,j\in[1, D(l)h]$ $\Rightarrow $ $\gcd(v_1,v_2)\in T(l)$.
\end{enumerate}
Since $A$ is $(2l+1)$-syndetic set, there exists a sequence $(s_i)_{i=1}^{D(l)h}$ such that $s_i\in S_i\cap A$ and $\gcd(s_i,s_j)\in T(l)\setminus Mul(F)$ for $i\neq j$.
Define   $$g_i:=\max\{u\in T(l)\setminus Mul(F)\colon u\mid s_i\} ~\forall ~i\in [1, D(l)h]. $$ Using  $|T(l)|=D(l)$, there exist 
$J\subset [1, D(l)h]$ and $w\in T(l)\setminus Mul(F)$ 
such that $|J|\geq h$ and $g_j=w ~\forall ~j\in J.$
Set $C_w:=\left\{\frac{s_j}{w}\colon j\in J\right\}.$

Since $g_j=w ~\forall ~j\in J$ and $\gcd(s_{j_1},s_{j_2})\in T(l)\setminus Mul(F) ~\forall ~j_1,j_2\in J$ with $j_1\neq j_2$, we have
 $\gcd(s_{j_1},s_{j_2})=w ~\forall ~j_1,j_2\in J$ with $j_1\neq j_2$. Therefore, $C_w$ is a pairwise prime set and
$wC_w=\left\{s_j\colon j\in J\right\}\subset A.$
\end{proof}
\begin{prop}\label{Existence_triveni_higher}
 Suppose that $k,l\in\mathbb{N}$, $H_0$ be an infinite pairwise prime subset of $\mathbb{N}$ and  $A$ be a $(2l+1)$-syndetic set 
 with $A\notin \mathcal{S}_{k,2l+1,H_0}$.
If $(F,h,l)$ is an $\tvname$ for each $h\in \mathbb{N}$, then $\exists ~$ $F'\subset T(l)\setminus \{1\}$ with $F\subsetneq F'$ such that $(F',h,l)$ is an $\tvname$ for each $h\in \N$.
\end{prop}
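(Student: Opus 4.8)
The plan is to read this as the inductive step that raises the order of a Triveni triplet by exactly one, with Proposition \ref{Higher_triveni_Main_prop} as the engine and a pigeonhole over the finite set $T(l)$ pinning down the new element. First I would feed the hypothesis into that proposition at every scale: since $(F,h,l)$ is an $A$-Triveni triplet for each $h$, it is in particular one of size $\Lambda(D(l)n,l)$ for each $n\in\mathbb{N}$, so Proposition \ref{Higher_triveni_Main_prop} yields, for each $n$, an element $w_n\in T(l)\setminus Mul(F)$ and a pairwise prime set $C_n$ with $|C_n|=n$ and $w_nC_n\subset A$. Because $T(l)\setminus Mul(F)$ is finite, the pigeonhole principle produces a single $w\in T(l)\setminus Mul(F)$ realized by $w_n$ for infinitely many $n$; passing to subsets of the corresponding $C_n$ shows that for every $h$ there is a pairwise prime set $C_w$ with $|C_w|=h$ and $wC_w\subset A$. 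I would then set $F':=F\cup\{w\}$. Since $Mul(F)\supseteq F$ (each $u\in F$ divides itself), we have $w\notin F$, so $F\subsetneq F'$; and $w\neq 1$, for otherwise $C_w\subset A$ would give pairwise prime subsets of $A$ of every size, and one of size $2l+1$ would force $A\in\mathcal{S}_{k,2l+1,H_0}$ by Proposition \ref{main_cor1}, contradicting the hypothesis. Hence $F'\subset T(l)\setminus\{1\}$.

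It remains to check that $(F',h,l)$ is an $A$-Triveni triplet for every $h$, i.e. to exhibit cross-coprime witnessing sets $(B'_u)_{u\in F'}$ in the sense of Definition \ref{triveni}. I would take $B'_w:=C_w$ (of size $h$), which already satisfies $wB'_w\subset A$ and is pairwise prime. The remaining sets $B'_u$ for $u\in F$ must have cardinality $h$, satisfy $uB'_u\subset A$, be pairwise cross-coprime among themselves, and — this is the genuine difficulty — be coprime to every element of $C_w$.

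The main obstacle is precisely this last coprimality: Proposition \ref{Higher_triveni_Main_prop} hands us $w$ and $C_w$ but offers no control over the primes dividing the elements of $C_w$ relative to the witnesses for $F$. I would resolve it by an over-provision-and-prune argument. Let $P$ be the finite set of primes dividing $w$ or some element of $C_w$, and invoke the hypothesis once more at size $h+|P|$ to obtain witnesses $(B_u)_{u\in F}$ of cardinality $h+|P|$ for $(F,h+|P|,l)$. Their union $\bigcup_{u\in F}B_u$ is a pairwise prime set (pairwise prime within each $B_u$ by condition (1) of Definition \ref{triveni} and cross-coprime between distinct indices by condition (2)), so each prime of $P$ divides at most one element of the whole union; consequently at most $|P|$ elements of the union are divisible by some prime of $P$. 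Deleting these ``bad'' elements removes at most $|P|$ points in total, hence at most $|P|$ from each $B_u$, leaving at least $h$ survivors, all coprime to $P$. Choosing $h$ of them as $B'_u$ preserves $uB'_u\subset uB_u\subset A$ and the mutual coprimality inherited from the $B_u$, while coprimality to $P$ makes each $B'_u$ coprime to $C_w=B'_w$. The family $(B'_u)_{u\in F'}$ thus witnesses that $(F',h,l)$ is an $A$-Triveni triplet, and since $h$ was arbitrary this holds for every $h$, completing the step.
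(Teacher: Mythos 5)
Your proposal is correct and follows essentially the same route as the paper: both use Proposition~\ref{Higher_triveni_Main_prop} as the engine, rule out the new element being $1$ via Proposition~\ref{main_cor1}, enforce coprimality between the new and old witnessing sets by over-provisioning and deleting the at most finitely many elements hit by a fixed finite set of primes (valid because a prime divides at most one element of a pairwise prime union), and close with a pigeonhole over the finite set $T(l)$. The only differences are organizational mirror images: you pigeonhole first to fix a single $w\in T(l)\setminus Mul(F)$ and then prune the old witnesses $(B_u)_{u\in F}$ (taken of size $h+|P|$) against the primes of $C_w$, whereas the paper prunes the new set $C_v$ (taken of size $W+k_0$) against the primes $\alpha$ of $\bigcup_{u\in F}B_u$ and applies the pigeonhole at the end to the finitely many possible sets $F_{k_0}=F\cup\{v_{k_0}\}\subset T(l)$.
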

\begin{proof}
 Let $k_0\in \mathbb{N}\setminus [1,2l+1]$. Since $(F,k_0,l)$ is an $\tvname$, there exists a sequence of pairwise prime sets
 $(B_u)_{u\in F}$ such that  for every distinct $u,u_1\in F$, $|B_u|=k_0$ and $\gcd(x,y)=1$ $\forall$ $x\in B_u, y\in B_{u_1}$. Let $B:= \cup_{u\in F}B_u$, $\alpha:=\{p\in\mathbb{P}\colon 
 p\mid x \text{ for some }x\in B\}$ and $W$ be the cardinality of $\alpha$.

 Since $A$ is a $(2l+1)$-syndetic set with $A\notin \mathcal{S}_{k,2l+1,H_0}$ and  $(F,h,l)$ is an  $\tvname$ for each $h\in \mathbb{N}$, 
 Proposition \ref{Higher_triveni_Main_prop}
 guarantees the existence of an element $v\in T(l)\setminus Mul(F)$ and a  pairwise prime set $C_v$ such that $|C_v|=W+k_0$ and
 $vC_v\subset A$. Also, Proposition \ref{main_cor1} ensures $v\neq 1$.
 Using the fact that $\alpha\subset \mathbb{P}$ with $|\alpha|=W$
and $C_v$ is a pairwise prime set of 
 cardinality $W+k_0$, we get a pairwise prime set $B_v\subset C_v$ such that $|B_v|=k_0$ and 
 $\gcd(p,b)=1$ $\forall$ $p\in \alpha, b\in B_v$. Then $\gcd(a,b)=1$ $\forall$ $a\in B, b\in B_v$ because elements of the pairwise prime set $B$ are made from primes in $\alpha$. Moreover $vB_v\subset A$, because $B_v\subset C_v$ and $vC_v\subset A$. 
 Since $(F,k_0,l)$ is an $\tvname$, therefore, $(F_{k_0},k_0,l)$ is also an $\tvname$ with $F_{k_0}=F\cup \{v\}\subset T(l)\setminus \{1\}$.
  
  Here we constructed a sequence $(F_n)_{n\in\N}$ such that $F\subsetneq  F_n\subset T(l)\setminus\{1\}$ and
  $(F_n,n,l)$ is an $\tvname ~\forall ~$ $n\in \mathbb{N}$. Since $|T(l)|< \infty$, there exist subsequence 
  $(F_{n_t})_{t\in \N}$ of  $(F_n)_{n\in\N}$ and set $F'$ with $F\subsetneq F'\subset T(l)\setminus \{1\}$ such 
  that $F_{n_t}=F' ~\forall ~$  $t\in\mathbb{N} $. Hence, $(F',t, l)$ is an  $\tvname$ for each 
  $t\in \mathbb{N}$.
 \end{proof}

 The  combination of Corollary \ref{Existence_triveni_1} and Proposition \ref{Existence_triveni_higher} generates Triveni triplets of various orders
with respect to those syndetic sets which do not contain configurations of the form $\{x, xn^kr\}$ where
$r\in \mathbb{N},n\in H_0$ with $r\equiv 1\pmod{n}$. Using these observations,  we will now see the  proof of Theorem \ref{thm1}.
\subsection{Proof of Theorem \ref{thm1}}\label{section4}
Let $H_0=\mathbb{P}$ or $H_0$ be an infinite pairwise prime subset of the set of composite numbers.
Since $A$ is a syndetic set, there exists $l\in \mathbb{N}$ such that
$A$ is a $(2l+1)$-syndetic set. 

For
$k\in \mathbb{N}$, if possible assume that  $A\notin \mathcal{S}_{k,2l+1,H_0}$. Then  Corollary \ref{Existence_triveni_1} gives the existence of an integer $d\in [2,l]$  such that $(\{d\},h,l)$ is an $\tvname$ for each $h\in \mathbb{N}$. Therefore, by Proposition
 \ref{Existence_triveni_higher}, there exists a sequence $(F_i)_{i=0}^{\infty}$ such that 
 \begin{enumerate}[label=(\alph*)]
  \item \label{ethitem2} for each $i\in [0,\infty)$ and $h\in \mathbb{N}$, $(F_i,h,l)$ is an $\tvname$,
 \item \label{ethitem1}$\{d\}=F_0\subsetneq F_1\subsetneq F_2\subsetneq \cdots \subsetneq F_i \subsetneq\cdots \subset T(l)$.
 \end{enumerate}
Since $|T(l)|<\infty$,  property \ref{ethitem1} of sequence  $(F_i)_{i=0}^{\infty}$ is a contradiction.
 Therefore, $A\in \mathcal{S}_{k,2l+1,H_0}$. In other words, the syndetic set $A$ contains configurations of the form $\{x, xn^kr\}$ where $r\in \mathbb{N},n\in H_0$
satisfying $r\equiv 1\pmod{n}$.

\section{Proof of Theorem \ref{thm2}}\label{section5}
Now we are going to prove Theorem \ref{thm2} by generating the required configurations using two different algorithms. 
\begin{lem}\label{thm2lemma1}
 A $2$-syndetic set $S$ contains infinitely many configurations of the type $\{x,xr^2\}$ or  infinitely many odd  perfect squares. 
\end{lem}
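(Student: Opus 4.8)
The plan is to prove the dichotomy in its contrapositive-friendly form: assume $S$ contains only finitely many odd perfect squares and construct infinitely many configurations $\{x,xr^2\}$; the other alternative is then automatic. The only hypothesis I have on $S$ is its $2$-syndeticity, which I would immediately recast as the statement that $\mathbb{N}\setminus S$ contains no two consecutive integers, i.e. every $n\notin S$ forces $n-1\in S$ and $n+1\in S$. Applying this to odd squares, the finiteness assumption gives $m^2\notin S$ for all large odd $m$, whence $m^2-1\in S$ and $m^2+1\in S$. The arithmetic of these witnesses is fully under control: since $m$ is odd, $m^2-1=(m-1)(m+1)$ is divisible by $8$ and $m^2+1\equiv 2\pmod 8$. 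This yields an explicit, infinite, $2$-adically rigid subfamily of $S$ to build from.

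Next I would translate the objective into multiplicative language. Writing each $n$ uniquely as $n=qk^{2}$ with $q$ squarefree, a configuration $\{x,xr^{2}\}$ is precisely a pair of elements of $S$ lying in one squarefree class $q$ whose roots $k$ are comparable under divisibility; avoiding all such configurations means every class of $S$ is an antichain for divisibility. Equivalently, for $x\in S$ I want a perfect square $s^{2}\ge 4$ with $s^{2}x\in S$. The intended mechanism is therefore an algorithm that, starting from a forced witness $x=m^2-1$ beyond the current bound, searches its square-dilates $4x,9x,16x,\dots$ for a member of $S$; each hit outputs a genuine progression $\{x,s^2x\}$, and choosing ever-larger $m$ accumulates infinitely many. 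I would pair this with the trivial companion algorithm that harvests any odd square that happens to lie in $S$, so that the lemma becomes the assertion that the two algorithms together never dry up. Note that Erd\H{o}s' Proposition \ref{erdos} is of no help here, since it only produces an integer common ratio, which is exactly the difficulty the square-ratio demand creates.

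The step I expect to be the main obstacle is guaranteeing that the search for a square-dilate inside $S$ succeeds infinitely often, because $2$-syndeticity resists the most direct attacks. One cannot hope to force a contradiction merely by bookkeeping on the witnesses $m^2\pm1$: these are pairwise incomparable under perfect-square ratio, since $(m'^2-1)=r^2(m^2-1)$ (and the analogous equations with $+1$) are Pell-type relations $(r m - m')(r m + m')=r^2-1$ with no solutions once $m$ is large; and crucially \emph{all} the forced witnesses are even, so every square-multiple $s^2(m^2\pm1)$ is even and no two such numbers can be consecutive. Hence the obstruction to striking every consecutive pair, when it finally arises, must involve the \emph{odd} elements of $S$, which are not dictated by the odd-square hypothesis and must be controlled separately. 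The heart of the proof is thus a careful simultaneous analysis of odd and even positions: one shows that, to keep hitting every consecutive pair $\{n,n+1\}$ around the scaled locations while refusing both odd squares and perfect-square ratios, $S$ is eventually driven into a position where some consecutive pair can only be met by an element that completes a progression $\{x,s^2x\}$. Tracking this with the rigid divisibility $8\mid m^{2}-1$ to pin down residues, and iterating past every bound, is exactly the explicit two-algorithm construction to be carried out in this section, and it is the part I expect to demand the most delicate argument.
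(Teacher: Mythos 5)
There is a genuine gap, and it sits exactly where you predicted: you never supply a mechanism that makes the search for a square-dilate $s^2x\in S$ succeed. Your setup (finitely many odd squares forces $m^2-1,\,m^2+1\in S$ for all large odd $m$) matches the paper's starting point, but the final paragraph is a promissory note: ``one shows that $S$ is eventually driven into a position where some consecutive pair can only be met by an element that completes a progression'' is the entire content of the lemma, and no argument is given for it. Worse, the claim you use to justify the difficulty is false. You assert that $(m'^2-1)=r^2(m^2-1)$ leads to the Pell-type relation $(rm-m')(rm+m')=r^2-1$ ``with no solutions once $m$ is large.'' That is only true for \emph{fixed} $r$; taking $rm-m'=1$ gives the solution family $r=2m$, $m'=2m^2-1$, valid for \emph{every} $m$, i.e.\ the identity $(2m^2-1)^2-1=(2m)^2(m^2-1)$. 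So your forced witnesses $m^2-1$ are very much comparable under perfect-square ratios, and this identity is precisely the missing mechanism: for large odd $m$ you already have $m^2-1\in S$ and $(2m^2-1)^2\notin S$ (it is an odd square beyond your bound), so $2$-syndeticity applied to the pair $\{(2m^2-1)^2-1,\,(2m^2-1)^2\}$ forces $(2m)^2(m^2-1)\in S$, completing the configuration $\{m^2-1,\,(2m)^2(m^2-1)\}$ for every large odd $m$.

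The paper's proof runs on the same fuel, via the twin identity $(2m^2+1)^2-1=4m^2(m^2+1)$: for each odd $m$, either $m^2\in S$, or $m^2+1\in S$, and then either $(2m^2+1)^2-1=4m^2(m^2+1)\in S$ (giving the configuration with $x=m^2+1$, $r=2m$) or the odd square $(2m^2+1)^2\in S$. The point your analysis missed is structural: although no two square-dilates of your even witnesses can be consecutive, a square-dilate of a witness \emph{can} be consecutive to an odd perfect square --- the identity manufactures exactly such an adjacency, so the two alternatives of the lemma confront each other inside a single consecutive pair and $2$-syndeticity decides between them. Without this (or an equivalent) identity, your two-algorithm scheme has no termination argument and the proof does not close.
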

\begin{proof}
  Let $m\in \mathbb{N}$ be an odd integer. If $m^2+1\notin S$, then the $2$-syndeticity of $S$ ensures that the  odd perfect square $m^2\in S$. 
 On the other hand, if $m^2+1\in S$,
  then  the identity
 $(2m^2+1)^2-1=4m^2(m^2+1)$ guarantees that  $\{x,xr^2\}\subset S$ for $x=m^2+1$  and $r=2m$ whenever $(2m^2+1)^2-1\in S$. For the case $(2m^2+1)^2-1\notin S$, the odd perfect square
 $(2m^2+1)^2\in S$ 
 due to the $2$-syndeticity of $S$. Hence, infinitude of the odd integers completes the proof.

\end{proof}
\begin{lem}\label{thm2lemma2}
 If $y\in 2\mathbb{N}+1$ and $S$ is  a $2$-syndetic set such that $y^2(y^2+2i)\in S$ for each $i\in\{1,-1\}$, then  $S$ contains a configuration of the type $\{x,xr^2\}$
 with $x,r\in\mathbb{N}\setminus\{1\}$ and $x\geq \frac{y^2-1}{4}$.
\end{lem}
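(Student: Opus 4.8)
The plan is to convert the two hypotheses into ``one less than a perfect square'' statements via algebraic identities and then run a short case analysis driven by the defining feature of a $2$-syndetic set: for every $N\in\mathbb{N}$, either $N\in S$, or else both $N-1\in S$ and $N+1\in S$. The relevant identities are
\[
y^2(y^2+2i)=(y^2+i)^2-1\qquad(i=\pm 1),
\]
so each anchor $A_i:=y^2(y^2+2i)\in S$ sits one below the square $(y^2+i)^2$, together with the Erd\H{o}s--type identity underlying Lemma \ref{thm2lemma1}, specialised at $m=y$:
\[
4y^2(y^2+i)=(2y^2+i)^2-1\qquad(i=\pm 1).
\]
These single out four candidate progressions: the pairs $\{y^2+i,\,A_i\}$ with common ratio $y^2$ (whose larger term is a given anchor), and the pairs $\{y^2+i,\,4y^2(y^2+i)\}$ with common ratio $(2y)^2$. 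In every candidate the smaller term lies in $\{y^2-2,\,y^2-1,\,y^2+1,\,y^2+2\}$, hence is at least $y^2-2\ge\frac{y^2-1}{4}$ for odd $y\ge 3$ (the only nontrivial range, since $y^2(y^2-2)<0$ when $y=1$, so the hypothesis already forces $y\ge 3$); and the ratio is $y$ or $2y$. Thus any one of the four that lands in $S$ immediately yields a configuration of the required shape with $x,r\in\mathbb{N}\setminus\{1\}$.

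I would first dispose of the easy cases. If $y^2-2\in S$ then $\{y^2-2,\,A_-\}\subset S$ is a geometric progression with ratio $y^2$, and symmetrically $\{y^2+2,\,A_+\}\subset S$ works if $y^2+2\in S$; in both the bound $x\ge\frac{y^2-1}{4}$ is clear. Hence I may assume $y^2-2\notin S$ and $y^2+2\notin S$, whereupon $2$-syndeticity forces $y^2-1\in S$ and $y^2+1\in S$. I then test the ratio-$(2y)^2$ candidates: if $4y^2(y^2-1)=(2y^2-1)^2-1$ lies in $S$ we take $\{y^2-1,\,4y^2(y^2-1)\}$, and if $(2y^2+1)^2-1\in S$ we take $\{y^2+1,\,4y^2(y^2+1)\}$.

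The main obstacle is the surviving case in which all four candidates fail. There, applying $2$-syndeticity to $(2y^2\pm 1)^2-1\notin S$ forces the two odd perfect squares $(2y^2-1)^2\in S$ and $(2y^2+1)^2\in S$ (and also $(2y^2\pm 1)^2-2\in S$). The difficulty is that no perfect-square ratio is visible among the points now known to lie in $S$: these cluster at three scales, $y^2$, $y^4$ and $4y^4$, and the pairwise ratios are each close to $1$ or to $4$ but never equal to a perfect square. For instance $(2y^2\pm 1)^2/A_\pm\to 4$, yet $y^4-2y^2\mid 4y^2+1$ fails for $y\ge 3$, so $(2y^2-1)^2/A_-$ is never an integer, and the only exact bridges from scale $y^2$ up to scale $4y^4$ are precisely the now-forbidden numbers $(2y^2\pm 1)^2-1$. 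To close this case I would feed the newly forced squares back into the same machinery by testing further candidates against $2$-syndeticity: the progression $\{y^2,\,4y^4\}$ of ratio $(2y)^2$ (pivoting on whether $y^2\in S$ and $4y^4=(2y^2)^2\in S$), the ratio-$y^2$ pairs $\{y^2\pm 3,\,y^2(y^2\pm 3)\}$ built from the additionally forced points $y^2\pm 3\in S$, and the ratio-$4$ tests $\{(2y^2\pm 1)^2,\,4(2y^2\pm 1)^2\}$ on the forced odd squares. Each failure enlarges the known portion of $S$, and the crux is to show this process terminates in a progression rather than merely regenerating ever larger odd squares as in the dichotomy of Lemma \ref{thm2lemma1}; I expect the resolution to hinge on using both anchors $A_+$ and $A_-$ simultaneously, so that the two symmetric chains of forced memberships cannot both escape to infinity. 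Throughout, the inequality $x\ge\frac{y^2-1}{4}$ is automatic, since every progression produced has smaller term comparable to $y^2$, and it is this uniform lower bound that will later allow Theorem \ref{thm2} to extract infinitely many distinct progressions by letting $y\to\infty$.
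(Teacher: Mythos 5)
Your proposal has a genuine gap, and you acknowledge it yourself: the ``surviving case'' in which all four of your candidates fail is never closed. Your plan there---feeding the forced odd squares $(2y^2\pm 1)^2$ back into the machinery and hoping the two symmetric chains of forced memberships ``cannot both escape to infinity''---is exactly the regress that the dichotomy of Lemma \ref{thm2lemma1} shows you cannot rule out by this kind of upward escalation alone: each failed test only produces membership statements at ever larger scales, with no termination argument. The entire escalation to $(2y^2\pm1)^2-1=4y^2(y^2\pm1)$ is in fact unnecessary for this lemma; it belongs to the proof of Theorem \ref{thm2}, where Lemma \ref{thm2lemma2} is invoked precisely to \emph{avoid} such an infinite regress.

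The idea you are missing is a \emph{descent} by the factor $4$, exploiting the parity of $y$. In the hard case you have $y^2-2\notin S$ and $y^2+2\notin S$; the paper then forces into $S$ the neighbours $y^2-1$ and $y^2+3$ (note: $y^2+3$, not $y^2+1$ as in your write-up---this choice matters, since $y$ odd gives $y^2\equiv 1\pmod 4$, so $y^2-1\equiv y^2+3\equiv 0\pmod 4$ while $y^2+1\equiv 2\pmod 4$). Writing $y^2-1=4a_y$ and $y^2+3=4b_y$, the integers $a_y$ and $b_y=a_y+1$ are consecutive, so $2$-syndeticity applied once more yields $a_y\in S$ or $b_y\in S$, and hence $\{a_y,4a_y\}\subset S$ or $\{b_y,4b_y\}\subset S$: a progression with ratio $r=2$ and $x\geq\frac{y^2-1}{4}$, closing the case in a single step. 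Your easy cases ($y^2+2i\in S$ gives ratio $y^2$) coincide with the paper's first case, and your verification of the bound $x\geq\frac{y^2-1}{4}$ and of $y\geq 3$ is fine; but without the division-by-$4$ step the proof does not terminate, so as it stands the proposal does not prove the lemma.
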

\begin{proof}
 If $y^2+2i\in S$ for some $i\in \{1,-1\}$, then $\{x,xr^2\}\subset S$ for $x=y^2+2i$ and $r=y$. 
 On the other hand, if
$ y^2+2i\notin S $ for each $i\in \{1,-1\}$, then $\{y^2-1,y^2+3\}\subset S$ due to $2$-syndeticity  of the set $S$. Since $y$ is an odd integer, we get consecutive natural numbers $a_y$ and $b_y$ satisfying
$4a_y=y^2-1 \in S \ \text{ and } \ 4b_y=y^2+3\in S.$ Since $S$
 is a $2$-syndetic set, so it follows that one of $a_y$ and $b_y$ lies inside $S$. Hence,  $S$ contains 
 $\{a_y, 4a_y\}$ or $\{b_y, 4b_y\}$. 
Therefore, $S$ contains the configuration $\{x,xr^2\}$ for $x\in\{a_y,b_y\}$ and $r=2$.
\end{proof}

\begin{proof}[Proof of Theorem \ref{thm2} (first method)]
Let $S$ be a $2$-syndetic set. By Lemma \ref{thm2lemma1}, it is enough to show that if  $m^2\in S$ for some $m\in 2\mathbb{N}+1$, then  
$S$ contains a configuration of the type $\{x,xr^2\}$ where $x,r\in \mathbb{N}\setminus\{1\}$ and $x\geq\frac{m-1}{4}$

Let $m\in \mathbb{N}\setminus \{1\}$ be an odd integer  such that $m^2\in S$. If $m^2n^2\in S$ for some $n\in \mathbb{N}\setminus\{1\}$, 
then we have $\{x,xr^2\}\subset S$ for $x=m^2$ and $r=n$. 
If  $m^2n^2\notin S$ for each $n\in \mathbb{N}\setminus\{1\}$, then the  $2$-syndeticity of $S$ ensures that
\begin{equation}
m^2n^2-1\in S ~~\forall ~~n\in \mathbb{N}\setminus\{1\}. \label{pfthm2eq1}
\end{equation}
Choose $z\in 2\mathbb{N}+1$ satisfying $z^2\equiv 0\pmod{m}$. 
Define $u(z,i):=z^2+i$ for each $i\in \{-1,1\}$.    
Clearly $u(z,i)^2\equiv 1\pmod{m}$ for each $i\in\{-1,1\}$.
Then, for each $i\in \{-1,1\}$,there exists $k_{u(z,i)}\in \mathbb{N}\setminus \{1\}$
\begin{equation}
mk_{u(z,i)}+1=u(z,i)^2. \label{pfthm2eq2}
\end{equation}
Putting $n=k_{u(z,i)}$ in expression \eqref{pfthm2eq1}, we get $m^2k_{u(z,i)}^2-1\in S$. Inserting the value of $k_{u(z,i)}$ from equation \eqref{pfthm2eq2} in this, we have
$m^2k_{u(z,i)}^2-1=({u(z,i)}^2-2){u(z,i)}^2\in S.$

Therefore, $\{x,xr^2\}\subset S$ for $x={u(z,i)}^2-2$ and $r={u(z,i)}$ whenever ${u(z,i)}^2-2\in S$.
For the remaining case  $u(z,i)^2-2\notin S \ \forall \ i\in\{-1,1\}$,  the $2$-syndeticity of $S$
 guarantees that
  $$u(z,i)^2-1=z^2(z^2+2i)\in S \ \forall \ i\in\{1,-1\}.$$
  Therefore we complete the proof by taking $y=z$ in Lemma \ref{thm2lemma2}.
\end{proof}
The above algorithm also guarantees a configuration of the form $\{x,xr^2\}$ with $r<x$ inside syndetic sets containing
the fourth power of some odd integer. For this, take $m$ to be an odd perfect square and choose $z=\sqrt{m}$ for some $i\in \{1,-1\}$ in the algorithm in the first method.
The next algorithm generates a configuration of the form $\{x,xr^2\}$ inside syndetic sets with the condition $r>x$ using the identity in the following lemma.
\begin{lem}\label{thm2lemma3}
 For $a\in \mathbb{N}$,  $a(4a+3)^2+1=(a+1) (4a+1)^2$.
\end{lem}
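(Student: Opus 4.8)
The plan is to treat this as a pure polynomial identity in the single variable $a$ and verify it by direct expansion, since nothing about the statement requires $a$ to be a natural number beyond the fact that the identity is asserted there. The strategy is simply to expand both sides as cubic polynomials and check that the coefficients agree term by term. There is no structural obstacle here; the only thing to watch is arithmetic care when squaring the linear factors and distributing.

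Concretely, I would first expand the left-hand side. Writing $(4a+3)^2 = 16a^2 + 24a + 9$, multiplying by $a$ gives $16a^3 + 24a^2 + 9a$, and adding $1$ yields
\begin{equation}
a(4a+3)^2 + 1 = 16a^3 + 24a^2 + 9a + 1. \label{thm2lemma3lhs}
\end{equation}
Next I would expand the right-hand side using $(4a+1)^2 = 16a^2 + 8a + 1$, so that
\begin{equation}
(a+1)(4a+1)^2 = (a+1)(16a^2 + 8a + 1) = 16a^3 + 24a^2 + 9a + 1, \label{thm2lemma3rhs}
\end{equation}
where the middle step combines the $a$-multiple $16a^3 + 8a^2 + a$ with the $1$-multiple $16a^2 + 8a + 1$.

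Comparing \eqref{thm2lemma3lhs} and \eqref{thm2lemma3rhs}, both sides equal the same polynomial $16a^3 + 24a^2 + 9a + 1$, which establishes the identity for every $a$ and in particular for every $a \in \mathbb{N}$. The only place a slip could occur is in the cross terms of the two expansions, so I would double-check that the $a^2$ coefficient on each side is $24$ and the linear coefficient is $9$; once those match, the identity follows immediately and no further argument is needed.
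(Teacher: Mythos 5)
Your proof is correct: both expansions are accurate, and both sides indeed equal $16a^3+24a^2+9a+1$. The paper states this lemma without proof, treating it as a routine polynomial identity, and your direct expansion is exactly the verification the author implicitly intends, so there is nothing to add.
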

\begin{proof}[Proof of Theorem \ref{thm2} (second method)]
 Let $S$ be a $2$-syndetic set. If $\{a,a+1\}\subset S$ for infinitely many $a\in \mathbb{N}$, then  applying the identity in Lemma \ref{thm2lemma3},
we get that $\{x,xr^2\}\subset S$ for $(x,r)=(a, (4a+3))$ or $(a+1, (4a+1))$ for those $a'$s. On the other hand, $S$ contains an infinite arithmetic progression  whenever $\{a,a+1\}\subset S$ for only finitely many $a\in \mathbb{N}$.
Therefore, we finish the proof by using the fact that any infinite arithmetic progression  contains an infinite geometric progression.
 \end{proof}

% ------------------------------------------------------------------------
\end{document}